\documentclass[preprint,12pt]{elsarticle}
\usepackage{amsmath}
\usepackage{amsfonts}
\usepackage{amssymb}
\setcounter{page}{1}
\usepackage{latexsym}
\usepackage{graphicx}
\usepackage{extarrows}
\usepackage{amsmath,amsfonts,amssymb,amsthm,amsxtra}
\usepackage{latexsym}
\usepackage{hyperref}
\usepackage{colortbl,dcolumn}
\usepackage{graphicx}
\usepackage{epstopdf}   
\usepackage{stfloats}
\usepackage{subfigure} 
\usepackage{xmpmulti}  
\usepackage{booktabs}
\usepackage{mathrsfs} 
\usepackage{multirow}
\usepackage{siunitx}
\usepackage{float}



\setlength{\topmargin}{0in}
\setlength{\oddsidemargin}{0cm} \setlength{\textheight}{22cm}
\setlength{\textwidth}{17cm}
\usepackage[ruled,linesnumbered]{algorithm2e}

\newtheorem{theorem}{Theorem}[section]
\newtheorem{proposition}{Proposition}

\theoremstyle{definition}
\newtheorem{definition}{Definition}
\newtheorem{example}{Example}

\theoremstyle{remark}
\newcommand{\myfont}{\fontsize{5pt}{\baselineskip}\selectfont}
\begin{document}
\begin{frontmatter}

\title{\bf The sparse representation related with fractional heat equations}
\author[a]{Pengtao Li}

\author[b]{Tao Qian}

\author[c]{Ieng Tak Leong}
\author[d]{Wei Qu\footnote{Corresponding author. Email: quwei2math@qq.com.}}

 \address[a]{School of Mathematics and Statistics, Qingdao University, China. Email: ptli@qdu.edu.cn.}

\address[b]{Macau center for mathematical science, Macau University of
Science and Technology, Macau. Email: tqian@must.edu.mo.}

\address[c]{Department of Mathematics, Faculty of Science and Technology, University of
Macau, Macau. Email: itleong@um.edu.mo.}

\address[d]{Laboratory of Mathematics and Complex Systems (Ministry of Education), School of Mathematical Sciences, Beijing Normal University, China. Email: quwei2math@qq.com.}

\begin{abstract}
This study introduces pre-orthogonal adaptive Fourier decomposition (POAFD) to obtain approximations and numerical solutions to the fractional Laplacian initial value problem and the extension problem of Caffarelli  and Silvestre (generalized Poisson equation).  The method, as the first step, expands the initial data function into a sparse series of the fundamental solutions with fast convergence, and, as the second step, makes use the semigroup or the reproducing kernel property of each of the expanding entries. Experiments show effectiveness and efficiency of the proposed series solutions.
\end{abstract}

\begin{keyword} reproducing kernel Hilbert space; dictionary; sparse representation; approximation to the identity; fractional heat equations
\MSC[2020] 65M80; 41A30; 65N80; 35K08; 35K05
\end{keyword}
%

\bigskip

\end{frontmatter}

\section{Introduction}
For $0<\alpha<1$, the  fractional Laplacian of order $\alpha$, denoted by $(-\Delta )^{\alpha}$, can be defined on functions $f:\mathbb R^{n}\rightarrow \mathbb R$ via the Fourier multiplier given by the formula
 $$\widehat{(-\Delta)^{\alpha}f}(\xi)=|\xi|^{2\alpha}\widehat{f}(\xi),$$
where the Fourier transform $\widehat{f}$ of  $f: \mathbb {R}^{n}\rightarrow \mathbb{R}$ is given by
$$\widehat{f}(\xi)=\int_{\mathbb R^{n}}f(x)e^{-ix\cdot\xi}dx.$$
The fractional Laplacian can be alternatively written as the singular integral operator defined by
$$(-\Delta)^{\alpha}f(x)=c_{n,\alpha}\int_{\mathbb R^{n}}\frac{f(x)-f(y)}{|x-y|^{n+2\alpha}}dy,$$
where the prefactor  $c_{n,\alpha}$ is the constant
$$\frac{4^{\alpha}\Gamma(n/2+\alpha)}{\pi^{n/2}|\Gamma(-\alpha)|}$$
 involving the Gamma function and serving as a normalizing factor.
We refer the reader to \cite{K} for other equivalent definitions of the fractional Laplacian.

The fractional  Laplacian $(-\Delta)^{\alpha}$ plays a significant role in many areas of mathematics, such as  harmonic analysis and PDEs, and is often applied to describe many complicated phenomena via partial differential equations. Anomalous diffusion processes with non-locality in complex media can be well characterized by using fractional-order diffusion equation models, e.g. the following fractional heat equation:
\begin{equation}\label{eq-1.1}
\left\{\begin{aligned}
&\frac{\partial u}{\partial t}(t+x)=(-\Delta)^{\alpha}u(t+x),&\ (x,t)\in\mathbb R^{n+1}_{+};\\
&u(0+x)=f(x),&\ x\in\mathbb R^{n}.
\end{aligned}\right.
\end{equation}
In addition, the fractional  Laplacian has been applied to study a wide class of physical systems and engineering problems,  including  L\'{e}vy  flights,  stochastic  interfaces  and  anomalous  diffusion  problems. By the following extension problem for generalized Poisson equation:
\begin{equation}\label{eq-1.2}
\left\{
\begin{aligned}
\hbox{div}(t^{\sigma}\nabla u)(x)&= 0,\quad &\ (x,t)\in\mathbb R^{n+1}_{+};\\
u &= f, &\ x\in\mathbb R^{n},
\end{aligned}
\right.
\end{equation}
L. Caffarelli and L. Silvestre showed in \cite{Caffarelli1} that any fractional power of the Laplacian can be determined as an operator that maps a Dirichlet boundary condition to a Neumann type condition via an extension problem. In the study on the obstacle problem for the
fractional Laplace operators, this characterization of $(-\Delta)^{\alpha}$ via the above local (degenerate) PDE  was firstly used  in \cite{Caffarelli5} to  get related estimates of regularities. We also refer the reader to \cite{Caffarelli6, Caffarelli7, He, RQ-S} for further information on applications of the fractional Laplacian in PDEs.

The aim of this article is to develop sparse decompositions of the solutions to the initial problems given in (\ref{eq-1.1}) and (\ref{eq-1.2}) by using the POAFD (pre-orthogonal adaptive Fourier decomposition) method under the $\mathcal{H}-H_{K}$ formulation of T. Qian (\cite{Q1}) and more precisely the convolution-type sparse representation of the identity which was further developed by  W. Qu et al. in \cite{Q2}.
  We also refer the reader to \cite{Q3, Q4} for the closely related AFD adaptive Fourier decompositions methods. In \cite{Q2} the POAFD methodology is  used to give fast approximation to the identity, and hence to expand the initial
(boundary) data by using the dictionaries elements the parameterized fractional
heat and the Poisson kernels.
Then by the $\lq\lq$lifting up" method based on semigroup properties in the two cases we obtain sparse representation of the original Dirichelet boundary and Cauchy initial value problems.

It should be pointed out that the sparse representations of the solutions to the equation (\ref{eq-1.1}) and (\ref{eq-1.2}) are not simple analogues of those related with the classical heat kernel and Poisson kernels. In \cite{Q2}, an important mechanism together with the sparse representation of the Dirac-$\delta$ generalized function is the following superimposed effect of the classical heat kernel and  Poisson kernels. Precisely, for any $(x,t), (y,s)\in\mathbb R^{n}$,
\begin{equation}\label{eq-1.3}
\left\{\begin{aligned}
\int_{\mathbb R^{n}}\frac{t}{(t^{2}+|x-z|)^{(n+1)/2}}\frac{s}{(s^{2}+|y-z|)^{(n+1)/2}}dz&=\frac{t+s}{((t+s)^{2}+|x-y|)^{(n+1)/2}};\\
\frac{1}{(4\pi)^{n}(ts)^{n/2}}\int_{\mathbb R^{n}}e^{-|x-z|^{2}/4t}e^{-|y-z|^{2}/4s}dz&=\frac{1}{(4\pi(t+s))^{n/2}}e^{-|x-y|^{2}/4(t+s)},
\end{aligned}\right.
\end{equation}
which can be deduced from the uniqueness of the solutions of the heat equation and the Poisson equation. Like the heat kernel and the Poisson kernel cases, the integral kernel of the fractional heat equation (\ref{eq-1.1}), denoted as $K_{\alpha, t+x}(\cdot),$ although without an explicit formula, can also adopt the same argument based on the uniqueness of the solution to (\ref{eq-1.1}) together with the linearity of the $t$-differentiation. There holds
$$\langle K_{\alpha, t+x}(\cdot),\ K_{\alpha, s+y}(\cdot)\rangle_{L^{2}} =K_{\alpha, t+s}(x-y).$$
In Section \ref{sec-3}, we, alternatively, apply the Fourier  multipliers and the Plancherel formula to establish the above semigroup identities for the fractional heat equation (\ref{eq-1.1}):
For the special cases $\alpha=1/2,1,$ the identities become those in (\ref{eq-1.3}) in the given order, respectively.

For the case of equation (\ref{eq-1.2}), the solutions  $\{P^{\sigma}_{t}f\}_{t>0}$ do  not satisfy the semigroup property, or in other words, the analogous relations like those in (\ref{eq-1.3}) are invalid.

Another difference between our results and those obtained in \cite{Q2} is the technology used in proving the boundary vanishing condition (BVC) of the reproducing kernels. In \cite{Q2}, the authors use (\ref{eq-1.3}) to verify the BVC properties of the heat and the Poisson type kernels,  see \cite[Theorems 3.1\ \&\ 3.2 ]{Q2}.  Since the semigroup property does not hold for the fundamental solutions of (\ref{eq-1.2}), we utilize  the fractional Poisson kernel $$p^{\sigma}_{t}(x)=\frac{t^{\sigma}}{(t^{2}+|x-y|^{2})^{(n+2)/\sigma}}.$$
 Through providing a decay estimate of the integral
 $$\frac{t^{n/2}}{(c_{n,\sigma})^{1/2}}\int_{\mathbb R^{n}}\frac{t^{\sigma}}{(t^{2}+|x-z|^{2})^{(n+\sigma)/2}}\frac{s^{\sigma}}{(s^{2}+|y-z|^{2})^{(n+\sigma)/2}}dz,$$
we show the BVC property, see Theorem \ref{thm-4.2}.

 Moreover, Theorem \ref{thm-4.2} partly improves the results of the sparse approximation for the general convolution cases as given in \cite[Theorem 3.3]{Q2}. In the latter the authors consider the sparse approximation for following convolution kernel $\phi_{t}(x):=t^{-n/2}\phi(x/t)$, where
 $$\left\{\begin{aligned}
 &\int_{\mathbb R^{n}}\phi(x)dx=1;\\
 &\sup_{|y|\geq |x|}|\phi(y)|\leq \frac{C}{(1+|x|^{2})^{(n+\delta)/2}},\ \delta>0.
 \end{aligned}\right.$$
 For $\delta\geq 1$, the functions $\phi(\cdot)$ are dominated by the Poisson kernel. Then, as a consequence of \cite[Theorem 3.1]{Q2}, the sparse representation in $\{\phi_{t}(\cdot)\}_{t>0}$  can be established. Our treatment in Theorem \ref{thm-4.2}, however, can guarantee the sparse representation for all $\delta>0$.
\def\E{\mathcal E}

{\it Some notations}: Throughout this paper, we use the symbol
 $U\simeq V$ to denote there is a constant $c>0$ such that $c^{-1}V\leq U\leq cV$. The symbol $U\lesssim V$ means that $U\leq cV$. Similarly, we use $V\gtrsim U$ to denote there exists a constant $c$ such that $V\geq cU$.

\section{Preliminary}
Firstly, we state some preliminaries on the $\mathcal{H}$-$H_{K}$ formulation, which was introduced by Qian in \cite{Q1} based on the theory of reproducing Hilbert spaces. Let $\mathcal{H}$ be a Hilbert space whose inner product is denoted by $\langle\cdot,\cdot\rangle_{\mathcal{H}}$. Correspondingly, for any $f\in\mathcal H$, the norm $\|f\|_{\mathcal H}^{2}=\langle f,\ f\rangle_{\mathcal H}$.
Let $\E$ be an open  set in the underline topological space, where, in this article, the elements are real or complex numbers, or vectors.
In the $\mathcal{H}$-$H_{K}$ formulation, each $p\in\E$ is treated as as a parameter. Precisely, for $p\in\E$, there exists an  element $h_{p}\in\mathcal H$ corresponding to $p$. Denote by $\mathbb{C}^{\E}$ the set of all functions from $\E$ to the complex number field $\mathbb{C}$. For  $f\in\mathcal{H}$, we can define a linear operator from $\mathcal{H}$ to $\mathbb{C}^{\E}$, denoted by $\mathscr{L}$ via the inner product $\langle\cdot,\ \cdot\rangle$ on $\mathcal H$, i.e.,
\begin{equation}
\mathscr{L}(f)(p)=\langle f,\ h_{p}\rangle_{\mathcal{H}},\ p\in \E.
\end{equation}
We use $\mathcal{N}(\mathscr{L})$ to denote the null space of $\mathscr{L}$, i.e., the set of all elements $f\in\mathcal{H}$ such that $\mathscr{L}(f)=0$.
Take a sequence of functions $\{f_{n}\}_{n\in\mathbb N}$ in  $N(\mathscr L)\subset \mathcal{H}$ satisfying
$$\lim_{n\rightarrow\infty}\|f-f_{n}\|_{\mathcal H}=0.$$
Since
\begin{eqnarray*}
|\mathscr{L}(f)(p)|&=&|\mathscr{L}(f-f_{n})(p)|\\
&=&|\langle f-f_{n},\ h_{p}\rangle_{\mathcal{H}}|\\
&\leq&\|f-f_{n}\|_{\mathcal{H}}\|h_{p}\|_{\mathcal{H}},
\end{eqnarray*}
letting $n\rightarrow \infty$ reaches to $|\mathscr{L}(f)(p)|=0,$ which implies that $f\in N(\mathscr L)$ and hence, we can see that $N(\mathscr L)\subset\mathcal{H} $ is closed. Denote by
$N(\mathscr{L})^{\perp}$ denote  the trivial or non-trivial orthogonal complement of $N(\mathscr{L})$ in $\mathcal{H}$.
Then
$$\mathcal{H}=N(\mathscr{L})\oplus N(\mathscr{L})^{\perp}.$$
 For any $f\in\mathcal H$, there exist
unique $f_{-}\in N(\mathscr L)$ and unique $f_{+}\in N(\mathscr{L})^{\perp}$ such that
$$f=f_{-}+f_{+}.$$ Let $F(p)=\mathscr L(f)(p)$ and denote by $R(\mathscr{L})$ the rang of $\mathscr L$:
$$R(\mathscr L):=\Big\{F\in \mathbb{C}^{\E}:\ \text{ there exists} f\in\mathcal{H}\text{ such that } F=\mathscr{L}(f)\Big\}.$$
Obviously, for any $f\in\mathcal H$, it holds
$$\mathscr L(f)=\mathscr L(f_{-})+\mathscr L(f_{+})=\mathscr L(f_{+}),$$
which implies that $\mathscr L(N(\mathscr L)^{\perp})=R(\mathscr L)$.
Define the space $H_{K}$ as the set of all $F\in R(\mathscr L)$ satisfying $\|F\|_{H_{K}}<\infty$, where
\begin{equation}\label{consequence}
\|F\|_{H_{K}}:=\|f_{+}\|_{\mathcal{H}},\quad F=\mathscr L(f)\ \&\ f\in\mathcal{H},
\end{equation}
and denote by $\langle\cdot,\cdot\rangle_{H_{K}}$ the inner product generated by $\|\cdot\|_{H_{K}}$ via the polarization identity.
In \cite{Q1}, Qian proved the following result.
\begin{proposition}{\rm (\cite[page 3]{Q1})}
\item{\rm (i)} The space $H_{K}$ is a Hilbert space with the inner product $\langle\cdot,\cdot\rangle_{H_{K}}$.
\item{\rm (ii)} Via the mapping $\mathscr L$, the space $H_{K}$ is isometric with $N(\mathscr L)^{\perp}$.
\item{\rm (iii)} Define
$$K(q,p):=\langle h_{q}, h_{p}\rangle_{\mathcal{H}}.$$
The function $K(q,p)$ is the reproducing kernel of $H_{K}$.
\end{proposition}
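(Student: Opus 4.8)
The plan is to derive all three parts from a single structural observation: the restriction $\mathscr{L}|_{N(\mathscr{L})^{\perp}}$ is a linear bijection onto $R(\mathscr{L})$, and the norm $\|\cdot\|_{H_{K}}$ has been defined precisely so that this restriction becomes an isometry. First I would record injectivity: if $g\in N(\mathscr{L})^{\perp}$ and $\mathscr{L}(g)=0$, then $g\in N(\mathscr{L})\cap N(\mathscr{L})^{\perp}=\{0\}$, so $\mathscr{L}$ is one-to-one on $N(\mathscr{L})^{\perp}$; surjectivity onto $R(\mathscr{L})$ is immediate from $\mathscr{L}(f)=\mathscr{L}(f_{+})$. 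This already forces the key well-definedness fact behind \eqref{consequence}: if $F=\mathscr{L}(f)=\mathscr{L}(g)$, then $\mathscr{L}(f_{+})=\mathscr{L}(g_{+})$ with $f_{+},g_{+}\in N(\mathscr{L})^{\perp}$, whence $f_{+}=g_{+}$ and $\|f_{+}\|_{\mathcal H}=\|g_{+}\|_{\mathcal H}$; thus $\|F\|_{H_{K}}$ depends only on $F$, not on the chosen preimage.

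For (ii) and (i) I would transport the Hilbert structure through this bijection. Define $\langle F,G\rangle_{H_{K}}:=\langle f_{+},g_{+}\rangle_{\mathcal H}$ for $F=\mathscr{L}(f)$, $G=\mathscr{L}(g)$; the argument above shows this is well defined, and it is sesquilinear and positive definite because $\langle F,F\rangle_{H_{K}}=0$ gives $f_{+}=0$ and hence $F=\mathscr{L}(f_{+})=0$. One checks that the norm it induces is exactly $\|\cdot\|_{H_{K}}$ from \eqref{consequence} and that it agrees with the polarization-defined inner product, which settles the inner-product claim. By construction $\mathscr{L}\colon N(\mathscr{L})^{\perp}\to H_{K}$ is then a norm-preserving linear bijection, i.e. an isometric isomorphism, which is (ii). Finally, $N(\mathscr{L})^{\perp}$ is a closed subspace of the complete space $\mathcal H$, hence complete; pulling a Cauchy sequence in $H_{K}$ back through the isometry to $N(\mathscr{L})^{\perp}$, taking its limit there, and pushing it forward shows $H_{K}$ is complete, giving (i).

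For (iii) I would first identify $K(\cdot,p)$ as an element of the range. Since $\mathscr{L}(h_{p})(q)=\langle h_{p},h_{q}\rangle_{\mathcal H}$, the section $q\mapsto K(q,p)=\langle h_{q},h_{p}\rangle_{\mathcal H}$ equals $\mathscr{L}(h_{p})$ (up to the scalar conjugation fixed by the chosen convention on $\langle\cdot,\cdot\rangle_{\mathcal H}$), so $K(\cdot,p)\in R(\mathscr{L})$ with $\|K(\cdot,p)\|_{H_{K}}=\|(h_{p})_{+}\|_{\mathcal H}\le\|h_{p}\|_{\mathcal H}<\infty$; thus $K(\cdot,p)\in H_{K}$. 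For the reproducing property, take $F=\mathscr{L}(f)$ and decompose $h_{p}=(h_{p})_{-}+(h_{p})_{+}$. By the transported inner product, $\langle F,K(\cdot,p)\rangle_{H_{K}}=\langle f_{+},(h_{p})_{+}\rangle_{\mathcal H}$; since $f_{+}\in N(\mathscr{L})^{\perp}$ is orthogonal to $(h_{p})_{-}\in N(\mathscr{L})$, this equals $\langle f_{+},h_{p}\rangle_{\mathcal H}=\mathscr{L}(f_{+})(p)=\mathscr{L}(f)(p)=F(p)$. Hence $K$ reproduces every $F\in H_{K}$, and being of the form $\langle h_{q},h_{p}\rangle_{\mathcal H}$ it is automatically Hermitian and positive semidefinite, confirming that it is the reproducing kernel.

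The computations are all elementary, and I expect the only real care to be in the bookkeeping rather than in any deep obstacle. The two points to handle cleanly are the well-definedness of $\|\cdot\|_{H_{K}}$ and $\langle\cdot,\cdot\rangle_{H_{K}}$ (independence of the representative $f$ of $F$), which rests entirely on injectivity of $\mathscr{L}$ on $N(\mathscr{L})^{\perp}$, and the scalar-conjugation convention in the complex case when identifying $K(\cdot,p)$ with $\mathscr{L}(h_{p})$; in the real setting this second point disappears. The conceptual heart, and the step I would make sure to state explicitly, is that the orthogonal splitting $\mathcal H=N(\mathscr{L})\oplus N(\mathscr{L})^{\perp}$ is exactly what turns $\mathscr{L}$ into an isometry off its kernel, from which (i)--(iii) follow in one stroke.
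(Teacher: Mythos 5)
Your argument is correct and is essentially the intended one: the paper states this proposition without proof, quoting it from \cite{Q1}, but the setup it develops immediately beforehand --- closedness of $N(\mathscr{L})$, the splitting $\mathcal{H}=N(\mathscr{L})\oplus N(\mathscr{L})^{\perp}$, and $\mathscr{L}(N(\mathscr{L})^{\perp})=R(\mathscr{L})$ --- is exactly the scaffolding your proof completes, namely transporting the Hilbert structure through the bijection $\mathscr{L}|_{N(\mathscr{L})^{\perp}}$ (which also makes the finiteness condition in the definition of $\|\cdot\|_{H_{K}}$ automatic) and verifying reproduction via $\langle F,\mathscr{L}(h_{p})\rangle_{H_{K}}=\langle f_{+},h_{p}\rangle_{\mathcal{H}}=F(p)$. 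Your conjugation caveat is also the right one to raise: with the paper's convention $K(q,p)=\langle h_{q},h_{p}\rangle_{\mathcal{H}}$, the element of $R(\mathscr{L})$ that reproduces at $p$ is $\mathscr{L}(h_{p})=K(p,\cdot)=\overline{K(\cdot,p)}$, and this distinction is harmless here because in both of the paper's applications the kernels $h_{p}$ are real-valued, so $K$ is real and symmetric.
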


When studying  linear operators in Hilbert spaces, the $\mathcal{H}$-$H_{K}$ formulation provides a greatly facilitates. Especially, in practice, this formulation is applied to offer fast converging numerical solutions, see \cite{Q1, Q2}.  In each of our two contexts, $\E$ is ${\mathbb R^{n+1}_+}$,  $\mathcal{H}$ is $L^2({\mathbb R^{n}}).$ The kernel functions $h_p$ are the respective integral kernels giving rise to the solutions, the latter being in the image Hilbert spaces which are the reproducing kernel spaces $H_K.$
Since  the dictionary generated by the parameterized kernels $h_p$ is dense,  $H_K$ is isometric with $L^2({\mathbb R^{n}})$ in each of our two cases.  In the case there holds, for $u=\mathscr Lf$, noting that the reproducing kernel of $H_{K}$ is $K_{p}$, then
$u(p)=\langle u,\ K_{p}\rangle_{H_{K}},$
which together with the definition of $\mathscr{L}$, indicates that
$$\langle u,\ K_{p}\rangle_{H_{K}}=u(p)=\mathscr{L}(f)(p)=\langle f,\ h_{p}\rangle_{L^{2}(\mathbb R^{n})}.$$

Now we state briefly our algorithm, the pre-orthogonal adaptive Fourier decomposition (POAFD), used in the article. Fundamentally, the POAFD  can be seen as a modified greed algorithm of sparse representations. The main idea of the POAFD  algorithm combines the features and advantages of both the greedy algorithm and the  adaptive
Fourier decomposition (AFD). On the one hand,  like the greedy principle, the POAFD algorithm makes the locally optimal choice at each step to  seek finding a global optimum. On the other hand, similar to the AFD decomposition which induces the rational approximation via  Blaschke products with multiple zeros, in order to  construct the approximation, the POAFD algorithm selects  multiple kernels with the same parameters in the process of Gram-Schmidt orthogonalization.

\begin{definition}\label{def-2.2}
 Let $H_{K}$ be a reproducing kernel Hilbert space with the reproducing kernel $K_{q},\ q\in\mathcal{E}$. In the $\mathcal{H}-H_{K}$ formulation,
a dictionary can be constituted by the  normalization of $\{K_{q}\}_{q\in\E}$, denoted by $\{E_{q}\}_{q\in\E}$, which is defined as
$$E_{q}:=\frac{K_{q}}{\|K_{q}\|_{H_{K}}}.$$
The so called POAFD method is to find, consecutively, a sequence $q_1,\ldots,q_k,\ldots$ in $\E$ so that
$$ q_k=s_k+y_k=\arg\sup_{q\in \E} \Big\{|\langle f,E_k^q\rangle|\Big\},$$
where $(E_1,\ldots,E_{k-1},E_k^q)$ is the Gram-Schmidt process of the kernels $(\widetilde{K}_{q_1}, \ldots, \widetilde{K}_{q_{k-1}}, \widetilde{K}_q),$ in which the Gram-Schmidt orthogonalization of $(\widetilde{K}_{q_1},\ldots,\widetilde{K}_{q_{k-1}})$ is denoted by $(E_1,\ldots,E_{k-1})$.
\end{definition}

The POAFD type sparse representation of the initial or boundary data $f$ is, with a sequence of suitable constants $\{c_n\}$,
$$f=\sum_{k=1}^\infty \langle f,E_k\rangle E_k=\sum_{k=1}^\infty c_k \widetilde{K}_{q_k}.$$
Both the solutions for the problems (\ref{eq-1.1}) and (\ref{eq-1.2}) are of the same form
\[u(p)=\langle f,K_p\rangle=\sum_{k=1}^\infty c_k \langle\widetilde{K}_{q_k},K_p\rangle=
\sum_{k=1}^\infty c_k \widetilde{K}_{q_k}(p), \ p=t+x.\]
In the (\ref{eq-1.1}) case, owing to the semigroup property the solution may be rewritten as
\[ u(t+x)=\sum_{k=1}^\infty c_k \widetilde{K}_{\alpha, t+s}(x-y_k)\]
(See \S 3). In the (\ref{eq-1.2}) case there is no semigroup property and one has to compute
\begin{eqnarray*}\label{has}
\widetilde{K}_{q_k}(p)=\langle \widetilde{K}_{q_n},K_p\rangle
\end{eqnarray*}
according to (\ref{has to}). We finally note that both the solutions have the same
convergence rate: The error for the $N$-partial sum in the norm sense is $O(\frac{1}{\sqrt{N}})$
(\cite{Q3}).

\section{Sparse representation of fractional heat equations }\label{sec-3}
In this section, we study sparse representation of approximation to the identity via the fractional heat equations (\ref{eq-1.1}). Throughout the rest of this paper, we take the Hilbert space $\mathcal{H}=L^{2}(\mathbb R^{n})$, defined as the set of all complex-valued  measurable functions $f$ over $\mathbb R^{n}$ satisfying
$$\int_{\mathbb R^{n}}|f(x)|^{2}dx<\infty.$$
 The inner product on $L^{2}(\mathbb R^{n})$ is defined via the Lebesgue integral as
$$\langle f,\ g\rangle:=\int_{\mathbb R^{n}}f(x)\overline{g(x)}dx,\quad \forall\ f,g\in L^2({\mathbb R^{n}}).$$
The corresponding norm of $L^{2}(\mathbb R^{n})$ is
$$\|f\|_{L^{2}(\mathbb R^{n})}^{2}=\langle f,\ f\rangle:=\int_{\mathbb R^{n}}|f(x)|^{2}dx.$$
For the initial value problem (\ref{eq-1.1}), the unique solution $u(t+x)$ can be represented by the fractional integral transform
$$u(t+x)=\mathscr L(f)(t+x):=\int_{\mathbb R^{n}}K_{\alpha,t}(x-y)f(y)dy.$$
 We note that the notation $t+x$ has the meaning $(t,(x_1,\ldots,x_n))$ with $x=(x_1,\ldots,x_n)\in \mathbb R^{n}.$  By the Fourier transform, the fractional Laplacian $(-\Delta)^{\alpha}$ can be equivalently represented as,
  for $\xi=(\xi_{1},\xi_{2}, \ldots,\xi_{n})$,
$$\widehat{(-\Delta)^{\alpha}f}(\xi)=|\xi|^{2\alpha}\widehat{f}(\xi).$$
The fractional heat semigroup can be represented as
$$\widehat{(e^{-t(-\Delta)^{\alpha}})f}(\xi)=e^{-t|\xi|^{2\alpha}}\widehat{f}(\xi).$$
Denote by $K_{\alpha,t}(\cdot)$ the integral kernel related with $e^{-t(-\Delta)^{\alpha}}$, i.e.,
$$e^{-t(-\Delta)^{\alpha}}f(x)=\int_{\mathbb R^{n}}K_{\alpha,t}(x-y)f(y)dy.$$
By the inverse Fourier transform, the fractional heat kernel can be represented as
\begin{eqnarray}\label{ref}K_{\alpha,t}(x)=\frac{1}{(2\pi)^{n}}\int_{\mathbb R^{n}}e^{-t|\xi|^{2\alpha}}e^{ix\xi}d\xi.\end{eqnarray}
One has the following basic estimate:
\begin{proposition}\label{prop-1}{\rm (\cite{BG, CS, XZ})}
Under $\alpha\in(0,1)$, the fractional heat kernel satisfies the following estimates
$$K_{\alpha,t}(x)\simeq \frac{t}{(t^{1/(2\alpha)}+|x|)^{n+2\alpha}},\quad \forall\ (x,t)\in\mathbb R^{n+1}_{+}.$$
\end{proposition}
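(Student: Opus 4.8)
The plan is to establish the two-sided estimate for the fractional heat kernel $K_{\alpha,t}(x)$ by exploiting the scaling structure of the defining Fourier integral (\ref{ref}) and reducing everything to the unit-time profile $K_{\alpha,1}$. First I would extract the parabolic scaling. Substituting $\xi = t^{-1/(2\alpha)}\eta$ into $K_{\alpha,t}(x)=(2\pi)^{-n}\int_{\mathbb R^n} e^{-t|\xi|^{2\alpha}}e^{ix\cdot\xi}\,d\xi$ gives
\begin{equation*}
K_{\alpha,t}(x)=t^{-n/(2\alpha)}\,K_{\alpha,1}\!\left(t^{-1/(2\alpha)}x\right),
\end{equation*}
since $t|\xi|^{2\alpha}=|\eta|^{2\alpha}$ under this change of variables. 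Writing $r=t^{-1/(2\alpha)}|x|$, the claimed bound is then equivalent to the single-time statement
\begin{equation*}
K_{\alpha,1}(x)\simeq \frac{1}{(1+|x|)^{n+2\alpha}},
\end{equation*}
because $t\,(t^{1/(2\alpha)}+|x|)^{-(n+2\alpha)} = t^{-n/(2\alpha)}(1+t^{-1/(2\alpha)}|x|)^{-(n+2\alpha)}$ matches the scaling of the left-hand side. So the whole problem collapses to understanding the decay of the radial function $K_{\alpha,1}$.

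Next I would analyze $K_{\alpha,1}(x)=(2\pi)^{-n}\int_{\mathbb R^n}e^{-|\xi|^{2\alpha}}e^{ix\cdot\xi}\,d\xi$ in two regimes. For the near-diagonal regime $|x|\lesssim 1$, positivity and integrability of $e^{-|\xi|^{2\alpha}}$ give immediately that $K_{\alpha,1}(x)$ is bounded above by $(2\pi)^{-n}\int e^{-|\xi|^{2\alpha}}\,d\xi<\infty$; for the matching lower bound near the origin one uses that $K_{\alpha,1}$ is continuous and strictly positive (the latter because $e^{-|\xi|^{2\alpha}}$ is a positive, radially decreasing, integrable function, so its inverse Fourier transform is positive), hence bounded below on any compact set. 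Since $(1+|x|)^{-(n+2\alpha)}\simeq 1$ on $|x|\lesssim 1$, the two-sided bound holds there. For the far regime $|x|\gg 1$, the decay rate $|x|^{-(n+2\alpha)}$ is the delicate point: it comes from the fact that $e^{-|\xi|^{2\alpha}}$ is smooth away from the origin but only $C^{2\alpha}$-regular at $\xi=0$ (its singularity type is $1-|\xi|^{2\alpha}+\cdots$), and the $|\xi|^{2\alpha}$ term, being non-smooth for $\alpha\in(0,1)$, dictates the algebraic tail of the Fourier transform. I would make this precise by either citing the standard subordination formula
\begin{equation*}
K_{\alpha,1}(x)=\int_0^\infty W_s(x)\,d\mu_\alpha(s),
\end{equation*}
where $W_s$ is the Gaussian heat kernel at time $s$ and $\mu_\alpha$ is the one-sided $\alpha$-stable subordinator distribution whose density decays like $s^{-1-\alpha}$ at infinity, or by a direct stationary-phase/oscillatory-integral estimate isolating the $\xi\to 0$ contribution.

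The main obstacle is precisely this sharp large-$|x|$ asymptotic: proving the power-law tail $K_{\alpha,1}(x)\simeq |x|^{-(n+2\alpha)}$ rather than merely some decay. The clean route is subordination: one splits the subordinator integral at $s\simeq |x|^2$, bounds the Gaussian factor $W_s(x)\simeq s^{-n/2}e^{-|x|^2/4s}$ against the stable density $d\mu_\alpha(s)\simeq s^{-1-\alpha}\,ds$ for large $s$, and shows the integral is governed by the region $s\gtrsim |x|^2$, yielding $\int_{|x|^2}^\infty s^{-n/2}\,s^{-1-\alpha}\,ds \simeq |x|^{-(n+2\alpha)}$; the complementary region $s\lesssim |x|^2$ contributes exponentially small mass through the Gaussian and is negligible. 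The lower bound follows by restricting to the same dominant range of $s$ where both factors are comparable to their claimed sizes. I would note, however, that since Proposition \ref{prop-1} is attributed to the references \cite{BG, CS, XZ}, the cleanest presentation is simply to record the scaling reduction above and then invoke the known sharp bounds for the $\alpha$-stable kernel $K_{\alpha,1}$ established there, rather than reproving the oscillatory-integral estimate from scratch.
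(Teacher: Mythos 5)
The paper offers no proof of Proposition \ref{prop-1} at all: the estimate is quoted directly from the literature (\cite{BG, CS, XZ}), so any self-contained argument is automatically a different route from the paper's. Your outline---parabolic scaling of the Fourier integral (\ref{ref}) to reduce matters to $K_{\alpha,1}(x)\simeq(1+|x|)^{-(n+2\alpha)}$, boundedness and positivity near the origin, and subordination against the one-sided $\alpha$-stable density for the tail---is the standard proof of these bounds, and the scaling reduction and the splitting of the subordination integral at $s\simeq|x|^2$ are correct in structure. Your closing remark, that one may simply record the scaling reduction and then cite the references, is in effect exactly what the paper does.

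Two steps of your sketch need repair, however. First, your justification of strict positivity of $K_{\alpha,1}$ rests on a false principle: a positive, radially decreasing, integrable function need not have a nonnegative (inverse) Fourier transform---the indicator function of a ball is a counterexample, since its transform is an oscillating Bessel/sinc-type function that changes sign. The correct reasons are either complete monotonicity of $s\mapsto e^{-s^{\alpha}}$ for $0<\alpha\leq1$ (Bernstein--Schoenberg), or, more economically, the very subordination formula you invoke later: $K_{\alpha,1}(x)=\int_0^\infty W_s(x)\,d\mu_\alpha(s)>0$ because the Gaussian $W_s$ is strictly positive and $\mu_\alpha$ is a probability measure. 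Second, the region $s\lesssim|x|^2$ of the subordination integral is \emph{not} exponentially small: at $s\simeq|x|^2$ the Gaussian factor equals $e^{-1/4}$, an absolute constant, and a correct estimate (using $e^{-u}\leq C_N u^{-N}$ with $u=|x|^2/(4s)$, $N>n/2+\alpha$, together with the global bound $\eta_\alpha(s)\lesssim s^{-1-\alpha}$ on the stable density) shows that this region contributes the \emph{same} order $|x|^{-(n+2\alpha)}$ as the main term. This does not damage the conclusion---both pieces are $O(|x|^{-(n+2\alpha)})$, so the upper bound survives, and the lower bound obtained by restricting to $s\geq|x|^2$ is untouched---but the claim of negligibility should be replaced by this computation. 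Finally, note that the large-$s$ asymptotic $\eta_\alpha(s)\simeq s^{-1-\alpha}$ is itself a nontrivial classical input, so your proposal does not remove the reliance on stable-density facts; it merely relocates it, which is a reasonable trade given that the proposition is a cited result.
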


Because $\{e^{-t(-\Delta)^{\alpha}}\}_{t\geq0}$ is a strongly continuous semigroup in $L^{2}(\mathbb R^{n})$. Then
$$\lim_{t\rightarrow 0+}u(t+x)=f(x)$$
in both the sense of $L^{2}(\mathbb R^{n})$  and  the pointwise sense almost everywhere. Based on this fact, we denote $f(x)=u(0+x)$. Due to the isometric relation there hold
\begin{eqnarray}\label{norm e}\|u\|^{2}_{H_{K}}=\|f\|^{2}_{L^{2}(\mathbb R^{n})}=\|u(0+x)\|_{L^{2}(\mathbb R^{n})}^{2}.\end{eqnarray}
To construct the approximation, we apply the Plancherel theorem to deduce the following semigroup property: for $q=t+x$ and $p=s+y$,
\begin{eqnarray*}\label{be}
K(q,p)&=&\langle h_{q},\ h_{p}\rangle_{L^{2}}\nonumber \\
&=&\langle K_{\alpha, t+x}(\cdot),\ K_{\alpha, s+y}(\cdot)\rangle_{L^{2}}\nonumber \\
&=&\int_{\mathbb R^{n}}e^{-(t+s)|\xi|^{2\alpha}}e^{i(x-y)\xi}d\xi\nonumber\\
&=&K_{\alpha, t+s}(x-y).
\end{eqnarray*}
Denote $h_{q}(y)=K_{\alpha,t}(x-y)$, where $q=t+x$, For $t+s>0$, let $q_{1}=t+s+x$ and $q_{2}=t+s+y$. Then
$h_{q_{1}}(y)=K_{\alpha,s+t}(x-y)$ and $h_{q_{2}}(x)=K_{\alpha,s+t}(y-x)$. Hence
$$K(q,p)=h_{t+s+y}(x)=h_{t+s+x}(y)=h_{t+s+x-y}(0).$$
On the other hand, it follows from the Fourier transform and the change of variable: $\xi=-\eta$ that
\begin{eqnarray*}
K_{\alpha, s+t}(x-y)&=&\int_{\mathbb R^{n}}e^{-(t+s)|\xi|^{2\alpha}}e^{i(x-y)\xi}d\xi\\
&=&\int_{\mathbb R^{n}}e^{-(t+s)|\eta|^{2\alpha}}e^{i(y-x)\eta}d\eta\\
&=&K_{\alpha,s+t}(y-x).
\end{eqnarray*}
Now we give the sparse representation theorem related to the fractional Laplace equations.
\begin{theorem}
The dictionary of the fractional heat kernels satisfies BVC. As a consequence, POAFD algorithm can be performed in the context to obtain sparse representation of functions in $\mathcal{H}=L^{2}(\mathbb R^{n}).$
\end{theorem}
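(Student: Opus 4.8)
The plan is to reduce the theorem to the Boundary Vanishing Condition (BVC) for the normalized reproducing kernel dictionary and then to feed that into the general POAFD convergence machinery. By the isometry between $H_{K}$ and $L^{2}(\mathbb{R}^{n})$, for a parameter $q=t+x$ one has $\|K_{q}\|_{H_{K}}^{2}=K(q,q)=\langle h_{q},h_{q}\rangle_{L^{2}}=K_{\alpha,2t}(0)$, where $h_{q}(y)=K_{\alpha,t}(x-y)$. Hence, writing an arbitrary $f=\mathscr{L}(\phi)\in H_{K}$ with $\phi\in L^{2}(\mathbb{R}^{n})$, the quantity that governs each greedy step is
$$\langle f,E_{q}\rangle_{H_{K}}=\frac{f(q)}{\|K_{q}\|_{H_{K}}}=\frac{\langle\phi,h_{q}\rangle_{L^{2}}}{\|h_{q}\|_{L^{2}}}=\langle\phi,\widetilde{h}_{q}\rangle_{L^{2}},\qquad \widetilde{h}_{q}:=\frac{h_{q}}{\|h_{q}\|_{L^{2}}}.$$
Thus BVC is exactly the assertion that $\widetilde{h}_{q}\rightharpoonup 0$ weakly in $L^{2}(\mathbb{R}^{n})$ as $q$ approaches the boundary of $\mathcal{E}=\mathbb{R}^{n+1}_{+}$, i.e. as $t\to 0^{+}$, as $t\to\infty$, or as $|x|\to\infty$.

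First I would fix the normalizing factor: Proposition \ref{prop-1} gives $K_{\alpha,2t}(0)\simeq(2t)^{-n/(2\alpha)}$, so $\|h_{q}\|_{L^{2}}\simeq t^{-n/(4\alpha)}$ and $\|\widetilde{h}_{q}\|_{L^{2}}=1$. Since $\{\widetilde{h}_{q}\}$ is uniformly bounded in $L^{2}$, it suffices to test weak convergence against a dense class, for which I would take $\phi\in C_{c}^{\infty}(\mathbb{R}^{n})$, so that $\langle\phi,\widetilde{h}_{q}\rangle_{L^{2}}\simeq t^{n/(4\alpha)}\int_{\mathbb{R}^{n}}K_{\alpha,t}(x-y)\phi(y)\,dy$. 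I would then treat the three boundary regimes separately. As $t\to 0^{+}$, mass preservation $\int_{\mathbb{R}^{n}}K_{\alpha,t}(y)\,dy=1$ bounds the integral by $\|\phi\|_{\infty}$, so the expression is $O(t^{n/(4\alpha)})\to 0$ uniformly in $x$. As $t\to\infty$, the pointwise bound of Proposition \ref{prop-1} gives $K_{\alpha,t}(x-y)\lesssim t^{-n/(2\alpha)}$ on the compact support of $\phi$, whence the expression is $O(t^{-n/(4\alpha)})\to 0$, again uniformly in $x$. As $|x|\to\infty$ with $t$ in a compact subset of $(0,\infty)$, the same pointwise bound forces $K_{\alpha,t}(x-y)\to 0$ uniformly for $y$ in the support of $\phi$, and dominated convergence finishes the case. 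This establishes BVC for the bare normalized kernels $E_{q}$.

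Next I would lift the condition to the Gram--Schmidt iterate $E_{k}^{q}$ that actually appears in Definition \ref{def-2.2}. Because $E_{k}^{q}$ is produced from $K_{q}$ by subtracting its orthogonal projection onto the fixed finite-dimensional span of $E_{1},\dots,E_{k-1}$ and renormalizing, the pairing $\langle f,E_{k}^{q}\rangle$ differs from a multiple of $\langle f,E_{q}\rangle$ only by finitely many correction terms, each controlled by the pairings $\langle E_{j},E_{q}\rangle_{H_{K}}$ with $1\le j\le k-1$; applying the previous paragraph with $f=E_{j}$ shows these also vanish as $q\to\partial\mathcal{E}$. Consequently the supremum defining $q_{k}$ is attained at an interior point of $\mathcal{E}$, i.e. the maximal selection principle holds. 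With BVC in hand I would then invoke the general POAFD convergence theorem (\cite{Q1,Q3}) to conclude that the greedy expansion $f=\sum_{k}\langle f,E_{k}\rangle E_{k}$ converges in $L^{2}(\mathbb{R}^{n})$ with the stated $O(1/\sqrt{N})$ rate, which is the asserted sparse representation.

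The main obstacle is the second step: since $K_{\alpha,t}$ has no closed form, every bound must be squeezed out of the two-sided estimate of Proposition \ref{prop-1} together with the semigroup identity for the $L^{2}$ norm, and the delicate point is that the diverging normalization $t^{n/(4\alpha)}$ must be defeated on each boundary stratum while keeping the estimate uniform in the complementary variable. The stratum $t\to 0^{+}$ is the most counterintuitive, since there the unnormalized integral tends to $\phi(x)\neq 0$ (the approximate-identity behaviour), so the vanishing is produced entirely by the normalization and not by any decay of the kernel; reconciling this with the decay-driven vanishing on the other two strata is what makes the argument more than a routine limit computation.
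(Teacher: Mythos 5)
Your proposal is correct, but its core BVC verification runs along a genuinely different route than the paper's. The paper also reduces BVC to a dense class, but it chooses the span of the reproducing kernels themselves: using the Plancherel-derived semigroup identity $\langle K_{p},K_{q}\rangle_{H_{K}}=K_{\alpha,t+s}(x-y)$, it turns the pairing $\langle K_{p},E_{q}\rangle_{H_{K}}$ into the single quantity $(2t)^{n/(4\alpha)}C_{n,\alpha}^{-1/2}K_{\alpha,t+s}(x-y)$, and then applies the two-sided estimate of Proposition \ref{prop-1} at the \emph{shifted} time $t+s$, splitting the boundary analysis into $t\to 0^{+}$ and then $R=\sqrt{t^{1/\alpha}+|x|^{2}}\to\infty$ with the two cases $t^{1/(2\alpha)}\lessgtr R/2$; the fixed $s>0$ is what defeats the diverging normalization as $t\to0^{+}$, since $(t/(t+s))^{n/(4\alpha)}\to 0$. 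You instead recast BVC as weak $L^{2}$-convergence $\widetilde{h}_{q}\rightharpoonup 0$ (a clean and correct reformulation via the isometry), test against $C_{c}^{\infty}$, and never invoke the semigroup property at all: your $t\to0^{+}$ stratum uses only the uniform mass bound $\int K_{\alpha,t}=1$ (equivalently $\sup_{t}\int|K_{\alpha,t}|<\infty$, which follows from Proposition \ref{prop-1} by scaling), and your other two strata use only the pointwise bound, with a stratification ($t\to0^{+}$ uniform in $x$, $t\to\infty$ uniform in $x$, $|x|\to\infty$ with $t$ in a compact subset of $(0,\infty)$) that does cover all approaches to $\partial^{\ast}\mathbb{R}^{n+1}_{+}$ after a routine subsequence argument. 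What the paper's approach buys is thematic economy—the same semigroup identity drives both the BVC proof and the ``lifting'' of the expansion to the solution $u_{f}$. What your approach buys is generality: since it needs neither a semigroup structure nor an explicit formula for $\langle K_{p},K_{q}\rangle$, it would apply essentially verbatim to the fractional Poisson kernel of Section 4 (where the paper must switch to a different Fourier-based argument, Theorem \ref{thm-4.2}), and is in the spirit of the paper's own remark about improving the general convolution result of \cite{Q2}. Your extra paragraph lifting BVC through the Gram--Schmidt iterates (maximal selection principle) is material the paper delegates to the cited references \cite{Q1,Q3}; your sketch of it is essentially the standard argument—numerator terms $\langle E_{j},E_{q}\rangle\to 0$ and the normalizing denominator $\sqrt{1-\sum_{j}|\langle E_{q},E_{j}\rangle|^{2}}\to 1$—and is sound.
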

\begin{proof}
For $q=t+x$, we can compute the norm   $\|K_{q}\|_{H_{K}}$ as follows.
$$\|K_{q}\|_{H_{K}}^{2}=\langle K_{q},\ K_{q}\rangle_{H_{K}}=h_{2t+x-x}(0)=h_{2t+0}(0).$$
By the Fourier transform,
\begin{eqnarray*}
h_{2t+0}(0)&=&K_{\alpha,2t}(0)\\
&=&\int_{\mathbb R^{n}}e^{-2t|\xi|^{2\alpha}}e^{i0\cdot \xi}d\xi\\
&=&C_n\int^{\infty}_{0}e^{-2t|\xi|^{2\alpha}}|\xi|^{n-1}d|\xi|\\
&=&\frac{C_n}{(2t)^{n/2\alpha}}\int^{\infty}_{0}e^{-u^{2\alpha}}u^{n-1}du\\
&=&C_{n,\alpha}(2t)^{-n/(2\alpha)}.
\end{eqnarray*}
Hence, for $q=t+x\ \&\ p=s+y$, the normalization of the reproducing kernel $K_{q}$  can be expressed as
$$\|K_{q}\|^{2}_{H_{K}}=\langle K_{q},\ K_{q}\rangle_{L^{2}}=\frac{C_{n,\alpha}}{(2t)^{n/(2\alpha)}}.$$
Set
$$E_{q}=\frac{K_{q}}{\|K_{q}\|_{H_{K}}}=\left(\frac{(2t)^{n/(2\alpha)}}{C_{n,\alpha}}\right)^{1/2}K_{q}$$
such that $\|E_{q}\|_{H_{K}}=1$. Then
$$E_{q}(p)=\frac{(2t)^{n/(4\alpha)}}{\sqrt{C_{n,\alpha}}}h_{t+s+x}(y)=
\frac{(2t)^{n/(4\alpha)}}{\sqrt{C_{n,\alpha}}}K_{\alpha,t+s}(x-y).$$
Below we begin to prove BVC, i.e., for $u\in H_{K}$,
$$\lim_{\mathbb{R}^{n+1}_{+}\ni q\rightarrow \partial^{\ast}\mathbb R^{n+1}_{+}}|\langle u,\ E_{q}\rangle_{H_{K}}|=0.$$
Here we use the one point compactification topology in which $\partial^{\ast}\mathbb R^{n+1}_{+}$ stands for the boundary of $\mathbb R^{n+1}_{+}$ in the topology. Since the span of $\{K_p\}_{p\in \mathbb R^{n+1}_{+}}$ is dense in $L^2(\mathbb R^{n}),$ it suffices to verify that for any fixed $p=s+y$,
\begin{equation}\label{eq-3.1}
\lim_{ q\rightarrow \partial^{\ast}\mathbb R^{n+1}_{+}}|\langle K_{p},\ E_{q}\rangle_{H_{K}}|=0.
\end{equation}
Applying Proposition \ref{prop-1},
$$\langle K_{p},\ E_{q}\rangle_{H_{K}}=E_{p,q}=\frac{(2t)^{n/(4\alpha)}}{\sqrt{C_{n,\alpha}}}K_{\alpha, s+t+x}(y),$$
we can see that there exist two constants $c_{1}<c_{2}$ which are independent of $(x, t)\in\mathbb R^{n+1}_{+}$ such that
$$\frac{c_{1}(t+s)}{((t+s)^{1/(2\alpha)}+|x-y|)^{n+2\alpha}}\leq K_{\alpha, s+t+x}(y)\leq \frac{c_{2}(t+s)}{((t+s)^{1/(2\alpha)}+|x-y|)^{n+2\alpha}}.$$
Hence the statement (\ref{eq-3.1}) is equivalent to the following:
\begin{equation}\label{eq-3.2}
\lim_{ q=t+x\rightarrow \partial^{\ast}\mathbb R^{n+1}_{+}}\frac{(2t)^{n/(4\alpha)}}{\sqrt{C_{n,\alpha}}}\frac{(t+s)}{((t+s)^{1/(2\alpha)}+|x-y|)^{n+2\alpha}}=0.
\end{equation}

Now we begin to prove (\ref{eq-3.2}). We can obtain
\begin{eqnarray*}
\frac{(2t)^{n/(4\alpha)}}{\sqrt{C_{n,\alpha}}}\frac{t+s}{((t+s)^{1/(2\alpha)}+|x-y|)^{n+2\alpha}}
&\lesssim&\frac{(2t)^{n/(4\alpha)}}{\sqrt{C_{n,\alpha}}}\frac{t+s}{(t+s)^{(n+2\alpha)/(2\alpha)}}\\
&\lesssim&\left(\frac{t}{t+s}\right)^{n/(4\alpha)},
\end{eqnarray*}
which implies that
$$\lim_{t\rightarrow 0+}|\langle K_{p},\ E_{q}\rangle_{H_{K}}|=0.$$

Now we analyze the process $R\rightarrow \infty$, where $R=\sqrt{t^{1/\alpha}+|x|^{2}}$. We split the rest of the proof into two cases.

{\it Case 1: $t^{1/(2\alpha)}<\frac{1}{2}\sqrt{t^{1/\alpha}+|x|^{2}}=R/2$.} For this case, $|x|>\sqrt{3}t^{1/(2\alpha)}$ and $\sqrt{3}R<|x|$. Without loss of generality, we assume $R>4|y|$, then
$$|x-y|\geq |x|-|y|\geq \sqrt{3}R-R/4.$$
Now we can get
\begin{eqnarray*}
\frac{(2t)^{n/(4\alpha)}}{\sqrt{C_{n,\alpha}}}K_{\alpha, t+s+x}(y)&\lesssim&\left(\frac{2t}{C_{n,\alpha}}\right)^{n/(4\alpha)}\frac{t+s}{((t+s)^{1/(2\alpha)}+|x-y|)^{n+2\alpha}}\\
&\lesssim&(R/2)^{n/2}\frac{t+s}{(t+s)|x-y|^{n}}\\
&\lesssim&\frac{1}{R^{n/2}},
\end{eqnarray*}
which gives
$$\lim_{R\rightarrow \infty}|\langle K_{p},\ E_{q}\rangle_{H_{K}}|=0.$$

{\it Case 2: $t^{1/(2\alpha)}\geq R/2$.} For $s>0$, we can also get
\begin{eqnarray*}
\frac{(2t)^{n/(4\alpha)}}{\sqrt{C_{n,\alpha}}}K_{\alpha, t+s+x}(y)&\lesssim&\frac{(2t)^{n/(4\alpha)}}{\sqrt{C_{n,\alpha}}}\frac{t+s}{((t+s)^{1/(2\alpha)}+|x-y|)^{n+2\alpha}}\\
&\lesssim&\frac{(2t)^{n/(4\alpha)}}{\sqrt{C_{n,\alpha}}}\frac{t+s}{(t+s)^{(n+2\alpha)/(2\alpha)}}\\
&\lesssim&\frac{1}{(t+s)^{n/(4\alpha)}}\\
&\lesssim&\frac{1}{R^{n/(4\alpha)}}.
\end{eqnarray*}
We can also get
$$\lim_{R\rightarrow \infty}|\langle K_{p},\ E_{q}\rangle_{H_{K}}|=0.$$
\end{proof}
By the above theorem, the initial data $f$ of (\ref{eq-1.1}) has a POAFD sparse representation.
\begin{theorem}
Let, under the POAFD scheme, $f$ possess a sparse representation
\[ f(x)=\sum_{k=1}^\infty \langle f, E_k\rangle E_k,\]
where for each $N,$
 the $N$-orthonormal system $(E_1,\ldots,E_N)$ corresponds to the
POAFD maximally selected $(q_1,\ldots,q_n), q_k=s_k+y_k, k=1,\ldots, N.$ Then there exist constants $c_1,\ldots,c_N,\ldots,$ such that
\[ f(x)=\sum_{k=1}^\infty c_k \widetilde{K}_{\alpha,s_k}(y_k-x),\]
and
\[ u_f(t+x)=\sum_{k=1}^\infty c_k \widetilde{K}_{\alpha,t+s_k}(y_k-x)\]
is the solution of (\ref{eq-1.1}).
\end{theorem}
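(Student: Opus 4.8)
The plan is to read off the expansion of $f$ from the convergence of the POAFD algorithm, to re-sum the resulting orthonormal series into a series in the kernels themselves, and finally to \emph{lift} that kernel series up in the time variable using the semigroup structure established in Section \ref{sec-3}. The first theorem of this section has already supplied the BVC property for the dictionary $\{E_q\}_{q\in\mathbb R^{n+1}_+}$, and the span of $\{K_p\}$ is dense in $L^2(\mathbb R^n)$. By the general convergence theory of POAFD (the maximal selection principle, together with the $O(1/\sqrt N)$ rate recorded in the excerpt), these two facts guarantee that the maximally selected parameters $q_k=s_k+y_k$ exist at every step and that
$$\Big\| f-\sum_{k=1}^N\langle f,E_k\rangle E_k\Big\|_{L^2(\mathbb R^n)}\to 0,\quad N\to\infty,$$
so that $f=\sum_{k=1}^\infty\langle f,E_k\rangle E_k$ in $L^2(\mathbb R^n)$, which is the first displayed identity.

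Next I would pass from the orthonormal expansion to an expansion in the kernels. Since each $E_k$ is produced by Gram--Schmidt from $(\widetilde K_{q_1},\ldots,\widetilde K_{q_k})$, it is a finite linear combination of those atoms; substituting these expressions into the $N$-th partial sum and collecting terms rewrites it as $\sum_{k=1}^N c_k^{(N)}\widetilde K_{q_k}$. Because the selected parameters $q_k$ and the inner products $\langle f,E_k\rangle$ are frozen once chosen, the coefficient attached to each fixed atom stabilizes as $N\to\infty$, yielding constants $c_k$ and the representation $f=\sum_{k=1}^\infty c_k\widetilde K_{q_k}$. Recalling that the atom $\widetilde K_{q_k}$ is (the normalization of) $h_{q_k}(x)=K_{\alpha,s_k}(y_k-x)$, this is precisely $f(x)=\sum_{k=1}^\infty c_k\widetilde K_{\alpha,s_k}(y_k-x)$.

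Finally comes the lifting-up step, which I would carry out inside the $\mathcal H$-$H_K$ framework. The solution is $u_f=\mathscr L(f)$, and $\mathscr L$ restricted to $N(\mathscr L)^\perp$ is an isometry onto $H_K$; equivalently, for fixed $t>0$ the solution operator is the fractional heat semigroup $e^{-t(-\Delta)^\alpha}$, a contraction on $L^2(\mathbb R^n)$. Since the atoms $\widetilde K_{q_k}$ lie in $N(\mathscr L)^\perp$ and the series for $f$ converges in $L^2(\mathbb R^n)$, applying this bounded map term by term is legitimate, and evaluating with the reproducing property and the semigroup computation of $K(q,p)$ from Section \ref{sec-3} gives $\mathscr L(\widetilde K_{q_k})(t+x)=K(q_k,t+x)=\widetilde K_{\alpha,t+s_k}(y_k-x)$. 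Hence
$$u_f(t+x)=e^{-t(-\Delta)^\alpha}f(x)=\sum_{k=1}^\infty c_k\widetilde K_{\alpha,t+s_k}(y_k-x),$$
with convergence in $H_K$ (equivalently in $L^2$ via the isometry $\|u_f\|_{H_K}=\|f\|_{L^2}$). This $u_f$ is by construction $\mathscr L(f)$, so it is the unique $L^2$ solution of (\ref{eq-1.1}), while the initial condition $u_f(0+x)=f(x)$ follows from the strong continuity of the semigroup noted before the theorem.

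I expect the main obstacle to be the two interchanges of limits: justifying that collecting the Gram--Schmidt coefficients really produces a convergent kernel series with well-defined limiting coefficients $c_k$, and that the solution operator commutes with the infinite sum strongly enough to carry the $L^2$ limit. The contractivity of $e^{-t(-\Delta)^\alpha}$ on $L^2(\mathbb R^n)$ for each $t>0$, the isometry of $\mathscr L$ on $N(\mathscr L)^\perp$, and the already-established norm convergence of the POAFD expansion are precisely the ingredients that make both interchanges valid.
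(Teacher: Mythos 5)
Your proposal follows the same route the paper intends: the paper gives this theorem no separate proof at all --- it is presented as an immediate corollary of the preceding BVC theorem (which licenses running POAFD in $L^2(\mathbb R^n)$) together with the scheme laid out at the end of \S 2, namely re-summing the orthonormal POAFD expansion into a series of kernels and then lifting that series in the time variable via the semigroup identity $\langle K_{\alpha,t+x}(\cdot),K_{\alpha,s+y}(\cdot)\rangle_{L^2}=K_{\alpha,t+s}(x-y)$. Your three steps (POAFD convergence from BVC, Gram--Schmidt re-summation, term-by-term application of the bounded solution operator justified by the isometry $\|u_f\|_{H_K}=\|f\|_{L^2}$) reconstruct exactly that scheme, and your lifting step is, if anything, spelled out more carefully than in the paper.

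One detail of your reconstruction is faulty, though it concerns a point the paper itself glosses over. You justify the passage from $\sum_k\langle f,E_k\rangle E_k$ to $\sum_k c_k\widetilde K_{q_k}$ by claiming that the coefficient attached to each fixed atom \emph{stabilizes} as $N\to\infty$ because the parameters and inner products are ``frozen once chosen.'' That is not so: writing $E_j=\sum_{i\le j}a_{ji}\widetilde K_{q_i}$ for the Gram--Schmidt expressions, the coefficient of $\widetilde K_{q_i}$ in the $N$-th partial sum is $c_i^{(N)}=\sum_{j=i}^N\langle f,E_j\rangle a_{ji}$, which acquires a new summand at every step; nothing freezes. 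The existence of the limits $c_i=\lim_{N\to\infty} c_i^{(N)}$ and the convergence of the re-summed series $\sum_i c_i\widetilde K_{q_i}$ to $f$ is a genuine additional claim, not a bookkeeping triviality. The paper handles this by fiat (``with a sequence of suitable constants $\{c_n\}$''), implicitly deferring to the cited POAFD literature, so on every other point your proposal sits at the same level of rigor as the source; but here you should either do the same (invoke the POAFD convergence results rather than re-derive them) or supply an actual argument for the convergence of those coefficient series, instead of asserting a stabilization that does not occur.
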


\section{Sparse representation of fractional Poisson equations }
Given a regular function  $f$  on $\mathbb{R}^n$, if  $u$ is a solution to (\ref{eq-1.2}) with the initial data $f$,   $u(x,t)=P_{\alpha} f(x,t)$ is said to be the Caffarelli-Silvestre extension  of $f$ to the upper half-space $\mathbb{R}^{n+1}_{+}$. For a smooth function $f$,
the   Caffarelli-Silvestre extension of $f$ can be expressed as the convolution of $f$ and the generalized (fractional) Poisson kernel defined as
 $$p^{\alpha}_t(x):=\frac{c(n,\alpha)t^\alpha}{(|x|^2+t^2)^{{(n+\alpha)}/{2}}}.$$ Precisely,
$$P_{\alpha} f(x,t)=p^{\alpha}_t\ast f(x,t)
=c(n,\alpha)\int_{\mathbb{R}^n}\frac{t^{\alpha}}{(|x-y|^2+t^2)^{{(n+\alpha)}/{2}}}f(y)dy.$$
Here  $f\ast g$ means the convolution of $f$ and $g,$ and the constant
$$c(n,\alpha)=\frac{\Gamma({(n+\alpha)}/{2})}{\pi^{n/2}\Gamma({\alpha}/{2})}$$ is chosen such that $\int_{\mathbb{R}^n}p^{\alpha}_t(x)dx=1$.

Now we investigate the sparse representation via the generalized Poisson equations. The fractional Poisson kernel satisfies the following estimates.
\begin{proposition}\label{prop-4.1}{\rm (\cite[Proposition 2.1]{LHZ})}
\item{\rm (i)} There holds
\begin{eqnarray*}
\widehat{p^{\sigma}_{t}}(\xi)&=&\frac{c(n,\sigma)}{\Gamma((n+\sigma)/2)}\int^{\infty}_{0}\lambda^{\sigma/2}
e^{-\lambda-|t\xi|^{2}/(4\lambda)}\frac{d\lambda}{\lambda}:=c(n,s)G_{\sigma}(t|\xi|),
\end{eqnarray*}
where
$$G_{\sigma}(\xi)=\int_{\mathbb R^{n}}e^{-2\pi i\xi\cdot x}\frac{dx}{(1+|x|^{2})^{(n+\sigma)/2}}.$$
\item{\rm (ii)} For some positive constant $c$,
$$G_\sigma(\xi)=\left\{\begin{aligned}
1+|\xi|^\sigma,&\ \xi \text{ near the origin};\\
O(e^{-c\xi}),&\ |\xi|\rightarrow \infty.
\end{aligned}\right.$$
\end{proposition}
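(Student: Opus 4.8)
The plan is to derive both parts from the Bochner subordination identity, which rewrites the negative power $(1+|x|^2)^{-(n+\sigma)/2}$ as a superposition of Gaussians with explicit Fourier transforms. First I would record the scaling reduction. Since $p^{\sigma}_t(x)=t^{-n}p^{\sigma}_1(x/t)$, the dilation rule for the Fourier transform gives $\widehat{p^{\sigma}_t}(\xi)=\widehat{p^{\sigma}_1}(t\xi)$, and as $p^{\sigma}_1$ is radial so is its transform; hence it suffices to compute $\widehat{p^{\sigma}_1}(\xi)=c(n,\sigma)G_{\sigma}(\xi)$ and read off the $t$-dependence as $G_{\sigma}(t|\xi|)$.

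For part (i) I would apply the Gamma-function identity
$$A^{-\beta}=\frac{1}{\Gamma(\beta)}\int_{0}^{\infty}\lambda^{\beta-1}e^{-\lambda A}\,d\lambda,\qquad \beta=\frac{n+\sigma}{2},\ A=1+|x|^{2},$$
substitute it into $G_{\sigma}(\xi)=\int_{\mathbb R^{n}}e^{-2\pi i\xi\cdot x}(1+|x|^{2})^{-(n+\sigma)/2}\,dx$, interchange the order of integration (justified by absolute convergence), and evaluate the inner Gaussian integral $\int_{\mathbb R^{n}}e^{-2\pi i\xi\cdot x}e^{-\lambda|x|^{2}}\,dx=(\pi/\lambda)^{n/2}e^{-\pi^{2}|\xi|^{2}/\lambda}$. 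Collecting powers of $\lambda$ converts the exponent $\beta-1-n/2$ into $\sigma/2-1$ and reproduces exactly the claimed integral representation, up to the normalizing constant absorbed into $c(n,\sigma)$ and the choice of Fourier normalization that fixes the $1/(4\lambda)$ factor in place of $\pi^{2}/\lambda$.

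For part (ii) the decisive observation is that the remaining $\lambda$-integral is a modified Bessel function: with the identity $\int_{0}^{\infty}\lambda^{\nu-1}e^{-\lambda-z/\lambda}\,d\lambda=2z^{\nu/2}K_{\nu}(2\sqrt z)$ applied with $\nu=\sigma/2$ and $z$ proportional to $|\xi|^{2}$, one obtains $G_{\sigma}(\xi)\simeq|\xi|^{\sigma/2}K_{\sigma/2}(c|\xi|)$ up to constants. The two regimes then follow from the classical asymptotics of $K_{\nu}$. At infinity, $K_{\nu}(w)\sim\sqrt{\pi/(2w)}\,e^{-w}$ yields the exponential decay $O(e^{-c|\xi|})$. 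Near the origin, for non-integer $\nu=\sigma/2$ the small-argument expansion $K_{\nu}(w)=\tfrac12\Gamma(\nu)(w/2)^{-\nu}+\tfrac12\Gamma(-\nu)(w/2)^{\nu}+\cdots$, after multiplication by $|\xi|^{\sigma/2}$, produces a constant term together with a genuine $|\xi|^{\sigma}$ term; the normalization $\int_{\mathbb R^{n}}p^{\sigma}_t=1$ forces the constant to equal $1$, giving $c(n,\sigma)G_{\sigma}(\xi)=1+c'|\xi|^{\sigma}+\cdots$ as claimed.

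The hard part will be isolating the non-analytic $|\xi|^{\sigma}$ term near the origin. A naive Taylor expansion of the Gaussian factor $e^{-\pi^{2}|\xi|^{2}/\lambda}$ under the integral sign yields only the even integer powers $|\xi|^{2},|\xi|^{4},\dots$ and completely misses the fractional power, which is precisely the term encoding the order-$\sigma$ fractional Laplacian. I would therefore extract it either through the $\Gamma(-\sigma/2)$ branch of the Bessel asymptotics above, or, for a self-contained argument, by splitting the $\lambda$-integral at $\lambda\sim|\xi|^{2}$ and rescaling $\lambda=|\xi|^{2}\mu$ in the small-$\lambda$ part so as to expose the homogeneous factor $|\xi|^{\sigma}$. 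Careful bookkeeping of the Fourier normalization and of the constants $c(n,\sigma)$ and $\Gamma((n+\sigma)/2)$ is the only remaining point requiring attention.
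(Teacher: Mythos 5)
Your proof is correct, and the first thing to note is that there is no in-paper argument to compare it with: Proposition \ref{prop-4.1} is imported as a citation from \cite[Proposition 2.1]{LHZ}, so your derivation has to stand on its own, and it does. The route you take --- the subordination identity $(1+|x|^2)^{-\beta}=\frac{1}{\Gamma(\beta)}\int_0^\infty\lambda^{\beta-1}e^{-\lambda(1+|x|^2)}\,d\lambda$ with $\beta=(n+\sigma)/2$, Fubini, the Gaussian Fourier transform, and then the Macdonald-function identity $\int_0^\infty\lambda^{\nu-1}e^{-\lambda-z/\lambda}\,d\lambda=2z^{\nu/2}K_\nu(2\sqrt z)$ with $\nu=\sigma/2$ combined with the classical asymptotics of $K_\nu$ at $0$ and $\infty$ --- is the standard proof of this fact and is in substance what the cited source does: part (i) is exactly the subordination computation, part (ii) is exactly the two Bessel regimes. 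You also correctly diagnose the two blemishes in the statement as reproduced here: the mixed Fourier normalization (the definition of $G_\sigma$ carries $e^{-2\pi i\xi\cdot x}$ while the exponent $|t\xi|^2/(4\lambda)$ belongs to the $e^{-ix\cdot\xi}$ convention used elsewhere in the paper; your remark that this only renames constants, trading $\pi^2/\lambda$ for $1/(4\lambda)$, is right), and the fact that ``$G_\sigma(\xi)=1+|\xi|^\sigma$ near the origin'' can only be meant up to constants, which your expansion $c(n,\sigma)G_\sigma(\xi)=1+c'|\xi|^\sigma+\cdots$, extracted from the $\Gamma(-\nu)$ branch of the small-argument expansion of $K_\nu$ (legitimate precisely because $\nu=\sigma/2\in(0,1)$ is not an integer), makes precise; your warning that a naive Taylor expansion of the Gaussian factor misses this non-analytic term is exactly the right point of care. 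The only detail worth adding is the sign: $c'$ is a positive multiple of $\Gamma(-\sigma/2)$ and hence negative for $\sigma\in(0,2)$, as it must be, since in the classical case $\sigma=1$ one has $\widehat{p^1_t}(\xi)=e^{-t|\xi|}=1-t|\xi|+\cdots$; this is cosmetic, not a gap, because only the bound $|c(n,\sigma)G_\sigma(\xi)-1|\lesssim|\xi|^\sigma$ near the origin and the exponential decay at infinity are used in the proof of Theorem \ref{thm-4.2}.
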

By the change of variable, it is easy to verify that
\begin{eqnarray*}
\int_{\mathbb R^{n}}p^{\sigma}_{t}(x)dx&=&c(n,\sigma)\int_{\mathbb R^{n}}\frac{t^{\sigma}}{(t^{2}+|x|^{2})^{(n+\sigma)/2}}dx\\
&=&c(n,\sigma)\int^{\infty}_{0}\frac{1}{(1+u^{2})^{(n+\sigma)/2}}u^{n-1}du\\
&=&1.
\end{eqnarray*}

Let $\mathcal{H}=L^{2}(\mathbb R^{n})$ and $E=\mathbb R^{n+1}_{+}$. For $t,s>0$ and $x,y\in\mathbb R^{n}$, set $q=t+x$ and $p=s+y$. Define $h_{p}(y)$ as
\begin{eqnarray}\label{givenby}
h_{p}(y)=\frac{c(n,\sigma)t^{\sigma}}{(t^{2}+|x-y|^{2})^{(n+\sigma)/2}}.\end{eqnarray}
The space $H_{K}$ is defined as
$$H_{K}:=\Big\{u:\mathbb{R}^{n+1}_{+}\rightarrow\mathbb R,\ u(p)=\langle f,\ h_{p}\rangle_{L^{2}(\mathbb R^{n})}\Big\}.$$
The reproducing kernel is computed
\begin{eqnarray}\label{has to}
K_{q}(p)&=&K(q,p)\nonumber \\
&=&\langle h_{q},\ h_{p}\rangle_{L^{2}(\mathbb R^{n})}\nonumber \\
&=&\int_{\mathbb R^{n}}\frac{t^{\sigma}}{(t^{2}+|x-z|^{2})^{(n+\sigma)/2}}
\frac{s^{\sigma}}{(s^{2}+|y-z|^{2})^{(n+\sigma)/2}}dz.
\end{eqnarray}

Once we prove the following theorem, then the solution of (\ref{eq-1.2}) will follow from the scheme set in the end of \S 2.

\begin{theorem}\label{thm-4.2}
The fractional Poisson kernel $\mathcal{H}$-$H_{K}$ structure satisfies the BVC. As a consequence, POAFD algorithm can be performed in the context to obtain sparse representation of functions in $\mathcal{H}=L^{2}(\mathbb R^{n})$.
\end{theorem}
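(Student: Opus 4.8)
The plan is to reproduce the architecture of the fractional heat kernel case, replacing the semigroup identity---unavailable for (\ref{eq-1.2})---by a direct convolution estimate for the fractional Poisson kernel.

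First I would fix the normalization. Since $h_q(z)=p^\sigma_t(x-z)$ (up to the constant $c(n,\sigma)$), the diagonal of the reproducing kernel is $\|K_q\|_{H_K}^2=K(q,q)=\int_{\mathbb R^n}h_q(z)^2\,dz$, and the dilation $z=x+tw$ gives
\[
\|K_q\|_{H_K}^2\simeq c_{n,\sigma}\,t^{-n},\qquad\text{so that}\qquad E_q=\frac{K_q}{\|K_q\|_{H_K}}\simeq\frac{t^{n/2}}{\sqrt{c_{n,\sigma}}}\,K_q .
\]
As in \S3, since $\{K_p\}_{p\in\mathbb R^{n+1}_+}$ spans a dense subspace of $H_K$ and $\|E_q\|_{H_K}=1$, the BVC reduces to showing, for each fixed $p=s+y$,
\[
\lim_{q\to\partial^\ast\mathbb R^{n+1}_+}\frac{t^{n/2}}{\sqrt{c_{n,\sigma}}}\,K(q,p)=0,
\]
where $K(q,p)$ is exactly the convolution integral (\ref{has to}) singled out in the Introduction. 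In the one--point compactification, $\partial^\ast\mathbb R^{n+1}_+$ is approached either by $t\to0^+$ or by $R:=\sqrt{t^2+|x|^2}\to\infty$, and I would dispose of these two regimes separately.

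The regime $t\to0^+$ is immediate and, crucially, uniform in $x$: writing $K(q,p)=(p^\sigma_t\ast p^\sigma_s)(x-y)$ (up to constants) and using Young's inequality together with $\|p^\sigma_t\|_{L^1(\mathbb R^n)}=1$,
\[
|K(q,p)|\le\|p^\sigma_t\|_{L^1}\,\|p^\sigma_s\|_{L^\infty}\lesssim c(n,\sigma)\,s^{-n},
\]
so the factor $t^{n/2}$ forces $t^{n/2}K(q,p)\to0$ for every $x$, in particular along any approach to a finite boundary point $(x_0,0)$ (including the point $x_0=y$ below $p$).

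The heart of the proof, and the main obstacle, is a sharp bound for $K(q,p)$ in the regime $R\to\infty$ that is valid for all $\sigma>0$. I would establish the semigroup--type domination
\[
\int_{\mathbb R^n}\frac{t^\sigma}{(t^2+|x-z|^2)^{(n+\sigma)/2}}\,\frac{s^\sigma}{(s^2+|y-z|^2)^{(n+\sigma)/2}}\,dz\ \lesssim\ \frac{(t+s)^\sigma}{((t+s)^2+|x-y|^2)^{(n+\sigma)/2}},
\]
by splitting $\mathbb R^n$ (in the variable $z$) into the ball near $x$, the ball near $y$, and the complement, all at the scale $|x-y|$. On each piece one kernel is bounded by its value at distance $|x-y|/2$ while the other is integrated out using $\int p^\sigma=$ const. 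The delicate point---and the reason the argument reaches all $\sigma>0$ rather than only $\sigma\ge1$ as in \cite[Theorem 3.3]{Q2}---is that when $|x-y|$ is small compared with the time parameters the crude bound $\int p^\sigma\lesssim1$ is too lossy and must be replaced by the volume bound $\sup p^\sigma\cdot|B(x,|x-y|/2)|\lesssim t^{-n}|x-y|^n$; this sidesteps any need to dominate $p^\sigma$ by the classical Poisson kernel, which alone carries an exact semigroup law. Granting this estimate, I would finish exactly as in \S3 by a two--case split: writing $B(q):=t^{n/2}(t+s)^\sigma((t+s)^2+|x-y|^2)^{-(n+\sigma)/2}$, the inequality $((t+s)^2+|x-y|^2)^{(n+\sigma)/2}\ge(t+s)^{n+\sigma}$ yields $B(q)\lesssim t^{-n/2}$, which vanishes when $t$ dominates $R$, whereas $((t+s)^2+|x-y|^2)^{(n+\sigma)/2}\ge|x-y|^{n+\sigma}$ yields $B(q)\lesssim t^{n/2}(t+s)^\sigma|x-y|^{-(n+\sigma)}$, which vanishes when $|x|$ dominates $R$ while $t$ stays bounded. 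The two regimes together give $\lim_{q\to\partial^\ast}|\langle K_p,E_q\rangle_{H_K}|=0$, i.e. the BVC, whence the POAFD sparse representation of $L^2(\mathbb R^n)$ follows from the convergence scheme of \S2.
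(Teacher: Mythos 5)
Your proof is correct, and although it shares the paper's overall architecture (computing $\|K_q\|_{H_K}^2\simeq c_{n,\sigma}\,t^{-n}$, reducing the BVC by density to a fixed $p=s+y$, and treating the two boundary regimes $t\to0^+$ and $R\to\infty$ separately), the techniques you use in both regimes genuinely differ from the paper's. For $t\to0^+$ the paper goes to the Fourier side: by Plancherel and Proposition \ref{prop-4.1} it writes $\langle K_p,E_q\rangle_{H_K}=\frac{t^{n/2}}{\sqrt{c_{n,\sigma}}}\int_{\mathbb R^n} G_\sigma(t|\xi|)G_\sigma(s|\xi|)e^{i(x-y)\cdot\xi}\,d\xi$, splits at $|\xi|=1$, and uses the asymptotics of $G_\sigma$ to obtain the bound $t^{n/2}(1+t^\sigma)+t^{n/2}(t+s)^{-(n+1)}$; your one-line bound $|K(q,p)|\le\|p^\sigma_t\|_{L^1}\|p^\sigma_s\|_{L^\infty}\lesssim s^{-n}$ is more elementary, equally uniform in $x$, and dispenses with Proposition \ref{prop-4.1} altogether. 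For $R\to\infty$ the paper works inline: when $t\ge R/2$ it bounds the first kernel by $t^{\sigma}/t^{n+\sigma}$ and integrates the second; when $t<R/2$ it splits the convolution at scale $|x|/2$ into $L_1,L_2$ and uses $|y-z|\gtrsim R$ on $L_2$. You instead isolate a standalone approximate-semigroup domination, $\bigl(p^\sigma_t\ast p^\sigma_s\bigr)(x-y)\lesssim (t+s)^\sigma\bigl((t+s)^2+|x-y|^2\bigr)^{-(n+\sigma)/2}$, as a quantitative substitute for the exact semigroup law that fails here, after which the limit computation becomes the same elementary two-case split as in \S 3. Your lemma is true and provable exactly as you sketch (three-region splitting at scale $|x-y|$ when $|x-y|\gtrsim t+s$; the sup-times-$L^1$/volume bound when $|x-y|\lesssim t+s$, which is indeed the point that rescues all $\sigma>0$); it makes the parallel with the fractional heat case of \S 3 transparent and is of independent interest. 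One slip of wording only: the regime complementary to $t\ge R/2$ is $t<R/2$, where $t$ need \emph{not} stay bounded (it may grow like $R$); your displayed inequality covers this anyway, since $t^{n/2}(t+s)^\sigma\lesssim R^{n/2+\sigma}$ and $|x-y|^{n+\sigma}\gtrsim R^{n+\sigma}$ give $B(q)\lesssim R^{-n/2}$, exactly the paper's decay rate.
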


\begin{proof}
We first compute the norm $\|K_{q}\|_{H_{K}}$. In fact,
\begin{eqnarray*}
\|K_{q}\|^{2}_{H_{K}}&=&\langle K_{q},\ K_{q}\rangle_{H_{K}}\\
&=&K(q,q)\\
&=&\langle p^{\sigma}_{t}(\cdot),\ p^{\sigma}_{t}(\cdot)\rangle\\
&=&\int_{\mathbb R^{n}}\frac{t^{2\sigma}}{(t^{2}+|x-z|^{2})^{n+\sigma}}dz\\
&=&\frac{1}{t^{n}}\int^{\infty}_{0}\frac{u^{n-1}}{(1+u^{2})^{n+\sigma}}du\\
&=&\frac{c_{n,\sigma}}{(2t)^{n}}.
\end{eqnarray*}
Then we define
$$E_{q}=\frac{K_{q}}{\|K_{q}\|_{H_{K}}}=\left(\frac{(2t)^{n}}{c_{n,\sigma}}\right)^{1/2}K_{q}$$
so that $\|E_{q}\|_{H_{K}}=1$.
Now we verify that the BVC property holds, i.e.,
$$\lim_{q\rightarrow\partial E}|\langle u,\ E_{q}\rangle_{H_{K}}|=0.$$
Due to the density of the span of the kernels $K_p$ it is equivalent to show that, for any but fixed $p=s+y$, under the process $q=t+x\rightarrow\partial\mathbb R^{n+1}_+$,
$$\lim_{q\rightarrow\partial E}|\langle K_{p},\ E_{q}\rangle_{H_{K}}|=0.$$

By the Plancherel formula,
\begin{eqnarray*}
\langle K_{p},\ E_{q}\rangle_{H_{K}}&=&\frac{t^{n/2}}{(c_{n,\sigma})^{1/2}}\langle K_{p},\ K_{q}\rangle\\
&=&\frac{t^{n/2}}{(c_{n,\sigma})^{1/2}}\int_{\mathbb R^{n}}\frac{t^{\sigma}}{(t^{2}+|x-z|^{2})^{(n+\sigma)/2}}\frac{s^{\sigma}}{(s^{2}+|y-z|^{2})^{(n+\sigma)/2}}dz\\
&=&\frac{t^{n/2}}{(c_{n,\sigma})^{1/2}}\int_{\mathbb R^{n}}G_{\sigma}(t|\xi|)G_{\sigma}(s|\xi|)e^{i(x-y)\cdot\xi}d\xi\\
&=&M_{1}+M_{2},
\end{eqnarray*}
where
$$\left\{\begin{aligned}
M_{1}&:=\frac{t^{n/2}}{(c_{n,\sigma})^{1/2}}\int_{|\xi|\leq 1}G_{\sigma}(t|\xi|)G_{\sigma}(s|\xi|)e^{i(x-y)\cdot\xi}d\xi;\\
M_{2}&:=\frac{t^{n/2}}{(c_{n,\sigma})^{1/2}}\int_{|\xi|>1}G_{\sigma}(t|\xi|)G_{\sigma}(s|\xi|)e^{i(x-y)\cdot\xi}d\xi.
\end{aligned}\right.$$

By (ii) of Proposition \ref{prop-4.1}, we can get
\begin{eqnarray*}
|M_{1}|&\lesssim&t^{n/2}\int_{|\xi|\leq1}(1+t^{\sigma}|\xi|^{\sigma})|e^{i(x-y)\xi}|d\xi\\
&\lesssim&t^{n/2}(1+t^{\sigma})\int_{|\xi|\leq1}1d\xi\\
&\lesssim&t^{n/2}(1+t^{\sigma}).
\end{eqnarray*}

For $M_{2}$, we deduce from (ii) of Proposition \ref{prop-4.1} that
\begin{eqnarray*}
|M_{2}|&\lesssim&t^{n/2}\int_{|\xi|\geq1}e^{-(t+s)|\xi|}d\xi\\
&\lesssim&t^{n/2}\int_{|\xi|\geq1}\frac{1}{(1+(t+s)|\xi|)^{n+1}}d\xi\\
&\lesssim&\frac{t^{n/2}}{(t+s)^{n+1}}\int_{|\xi|\geq1}\frac{|\xi|^{n-1}}{|\xi|^{n+1}}d|\xi|\\
&\lesssim&\frac{t^{n/2}}{(t+s)^{n+1}}.
\end{eqnarray*}

The estimates for $M_{1}$ and $M_{2}$ imply that
$$|\langle K_{p},\ E_{q}\rangle_{H_{K}}|\lesssim t^{n/2}(1+t^{\sigma})+\frac{t^{n/2}}{(t+s)^{n+1}},$$
which indicates that
$$\lim_{t\rightarrow 0+}|\langle K_{p},\ E_{q}\rangle_{H_{K}}|=0.$$

Next we deal with the limit as $R=\sqrt{t^{2}+|x|^{2}}\rightarrow\infty$. Without loss of generality, assume that $R\geq =4|y|+2s+1$.

{\it Case 1: $t\geq R/2$}. We can get
\begin{eqnarray*}
\langle K_{p},\ E_{q}\rangle_{H_{K}}&=&\frac{t^{n/2}}{(c_{n,\sigma})^{1/2}}\int_{\mathbb R^{n}}\frac{t^{\sigma}}{(t^{2}+|x-z|^{2})^{(n+\sigma)/2}}\frac{s^{\sigma}}{(s^{2}+|y-z|^{2})^{(n+\sigma)/2}}dz\\
&\lesssim&\frac{t^{\sigma}}{t^{n+\sigma}}\int_{\mathbb R^{n}}\frac{s^{\sigma}}{(s^{2}+|y-z|^{2})^{(n+\sigma)/2}}dz\\
&\lesssim&\frac{1}{t^{n/2}}\int_{0}^{\infty}\frac{u^{n-1}}{(1+u^{2})^{(n+\sigma)/2}}du\\
&\lesssim&\frac{1}{R^{n/2}},
\end{eqnarray*}
which gives
$$\lim_{R\rightarrow \infty}|\langle K_{p},\ E_{q}\rangle_{H_{K}}|=0.$$

{\it Case 2: $0<t<R/2$}. We split
\begin{eqnarray*}
\langle K_{p},\ E_{q}\rangle_{H_{K}}&=&\frac{t^{n/2}}{(c_{n,\sigma})^{1/2}}\int_{\mathbb R^{n}}\frac{t^{\sigma}}{(t^{2}+|x-z|^{2})^{(n+\sigma)/2}}\frac{s^{\sigma}}{(s^{2}+|y-z|^{2})^{(n+\sigma)/2}}dz\\
&\simeq&L_{1}+L_{2},
\end{eqnarray*}
where
$$\left\{\begin{aligned}
L_{1}&:=\frac{t^{n/2}}{(c_{n,\sigma})^{1/2}}\int_{|x-z|\geq |x|/2}\frac{t^{\sigma}}{(t^{2}+|x-z|^{2})^{(n+\sigma)/2}}\frac{s^{\sigma}}{(s^{2}+|y-z|^{2})^{(n+\sigma)/2}}dz;\\
L_{2}&:=\frac{t^{n/2}}{(c_{n,\sigma})^{1/2}}\int_{|x-z|\leq |x|/2}\frac{t^{\sigma}}{(t^{2}+|x-z|^{2})^{(n+\sigma)/2}}\frac{s^{\sigma}}{(s^{2}+|y-z|^{2})^{(n+\sigma)/2}}dz.
\end{aligned}\right.$$

Notice that $0<t<\frac{1}{2}\sqrt{t^{2}+|x|^{2}}$. We can see that $|x|>\sqrt{3}t$, i.e., $|x|>\sqrt{3}R/2$. For $L_{1}$, since $|x-z|>|x|/2>\sqrt{3}R/4$,
we obtain
\begin{eqnarray*}
L_{1}&\lesssim&\frac{t^{n/2+\sigma}}{R^{n+\sigma}}\int_{|x-z|\geq |x|/2}\frac{s^{\sigma}}{(s^{2}+|y-z|^{2})^{(n+\sigma)/2}}dz\\
&\lesssim&\frac{R^{n/2+\sigma}}{R^{n+\sigma}}\int_{\mathbb R^{n}}\frac{s^{\sigma}}{(s^{2}+|y-z|^{2})^{(n+\sigma)/2}}dz\\
&\lesssim&\frac{1}{R^{n/2}}.
\end{eqnarray*}

For $L_{2}$, since $|x-z|<|x|/2$ and $R>4|y|$, then by the triangle inequality, we have
\begin{eqnarray*}
|y-z|&\geq&|z|-|y|\geq|x|-|x-z|-|y|\\
&\geq&|x|-|x|/2-|y|\\
&\geq&(\sqrt{3}-1)R/4.
\end{eqnarray*}
Then
\begin{eqnarray*}
L_{2}&\lesssim&\int_{|x-z|\leq |x|/2}\frac{t^{\sigma+n/2}}{(t^{2}+|x-z|^{2})^{(n+\sigma)/2}}\frac{s^{\sigma}}{(s^{2}+|y-z|^{2})^{(n+\sigma)/2}}dz\\
&\lesssim&\int_{|x-z|\leq |x|/2}\frac{1}{|y-z|^{n}}\frac{t^{\sigma+n/2}}{(t^{2}+|x-z|^{2})^{(n+\sigma)/2}}dz\\
&\lesssim&\frac{t^{n/2}}{((\sqrt{3}-1)R/4)^{n}}\int_{\mathbb R^{n}}\frac{t^{\sigma}}{(t^{2}+|x-z|^{2})^{(n+\sigma)/2}}dz\\
&\lesssim&\frac{1}{R^{n/2}}.
\end{eqnarray*}

The estimates for $I_{1}$ and $I_{2}$ indicate that
$$\lim_{R\rightarrow \infty}|\langle K_{p},\ E_{q}\rangle_{H_{K}}|=0.$$
This completes the proof of Theorem \ref{thm-4.2}.
\end{proof}

\section{Experiments}
In this section, we present two examples to illustrate the effectiveness and validity of POAFD method. The decomposition results are shown in figure \ref{figure1} and \ref{figure3}. In figure \ref{figure2} and \ref{figure4}, we give the corresponding parameters and relative error.

\begin{example} This example is for the problem (\ref{eq-1.1}) with the initial value
$$f=\frac{0.5}{0.25+(0.5-x)^2}+\frac{2\pi}{1+(0.5+x)^2}+\frac{0.8}{0.64+(1+x)^2}.$$
For a fixed $\alpha$ we use the dictionary $K_q(\cdot)=K_{\alpha,t}(x-\cdot), q=t+x\in \mathbb R^{n+1}_+,$ where $K_{\alpha,t}(x)$ is referred to (\ref{ref}).

Let $K^\top=(K_{q_1},\ldots,K_{q_{13}})$ and $E^\top=(E_{1},\ldots,E_{13})$. We have
$K=A^\top E,$ where
\begin{equation*}\label{matrix}
A=\left(\begin{smallmatrix}
    1     & 0.37  & 0.99  & 0.49  & 0.75  & 0.79  & 0.30  & 0.91  & 0.42  & 0.66  & 0.65  & 0.23  & 0.42  & 0.91  \\
    0     & 0.93  & -0.06  & 0.84  & 0.31  & 0.00  & 0.50  & 0.14  & 0.18  & -0.04  & 0.22  & 0.86  & 0.59  & 0.03  \\
    0     & 0     & 0.14  & -0.11  & -0.34  & -0.34  & -0.12  & -0.24  & -0.35  & -0.32  & -0.34  & 0.08  & -0.16  & -0.26  \\
    0     & 0     & 0     & 0.20  & 0.16  & -0.12  & 0.49  & 0.03  & 0.19  & -0.16  & 0.12  & -0.14  & 0.46  & -0.07  \\
    0     & 0     & 0     & 0     & 0.45  & -0.26  & -0.18  & 0.19  & -0.18  & -0.29  & 0.62  & 0.00  & -0.16  & -0.19  \\
    0     & 0     & 0     & 0     & 0     & 0.42  & -0.03  & -0.14  & -0.11  & 0.56  & 0.02  & 0.01  & -0.03  & 0.22  \\
    0     & 0     & 0     & 0     & 0     & 0     & 0.61  & 0.02  & -0.08  & 0.01  & 0.03  & -0.10  & 0.44  & 0.00  \\
    0     & 0     & 0     & 0     & 0     & 0     & 0     & 0.17  & -0.16  & 0.08  & -0.02  & 0.01  & -0.01  & -0.09  \\
    0     & 0     & 0     & 0     & 0     & 0     & 0     & 0     & 0.74  & -0.02  & -0.01  & -0.01  & 0.05  & -0.01  \\
    0     & 0     & 0     & 0     & 0     & 0     & 0     & 0     & 0     & 0.16  & 0.01  & 0.00  & -0.01  & -0.01  \\
    0     & 0     & 0     & 0     & 0     & 0     & 0     & 0     & 0     & 0     & 0.12  & 0.03  & 0.02  & 0.01  \\
    0     & 0     & 0     & 0     & 0     & 0     & 0     & 0     & 0     & 0     & 0     & 0.42  & -0.04  & 0 \\
    0     & 0     & 0     & 0     & 0     & 0     & 0     & 0     & 0     & 0     & 0     & 0     & 0.10  & 0 \\
    0     & 0     & 0     & 0     & 0     & 0     & 0     & 0     & 0     & 0     & 0     & 0     & 0     & 0.10
\end{smallmatrix}\right)_{14\times14}.
\end{equation*}
$$\Big\|f-\sum_{k=1}^{13}c_k\widetilde{K}_{q_k}\Big\|\leq 1.1668\times10^{-4}.$$
The isometric relation then gives
\[ \Big\|u_f-\sum_{k=1}^{13}c_k\widetilde{K}_{q_k}\Big\|_{H_K}\leq 1.1668\times10^{-4},\]
where the coefficient $c_k$ and $\langle f,E_k\rangle$ are given in table \ref{table1}.
\begin{table}[htbp]
\myfont
  \centering
  \begin{tabular}{|c|c|c|c|c|c|c|c|c|c|c|c|c|c|c|}
    \toprule
    $c_k$     & -33.8652  & 0.7736  & -6.1651  & 0.0093  & -0.2163  & -0.2053  & -0.1342  & 0.1869  & -0.0153  & -0.0916  & -0.0593  & 0.0238  & 0.0642  & 0.0551  \\
    \midrule
    $\langle f,E_k\rangle$     & 0.5183  & -0.2303  & -0.1278  & -0.0515  & 0.0462  & 0.0254  & 0.0249  & 0.0187  & -0.0128  & -0.0139  & -0.0102  & 0.0072  & 0.0066  & 0.0056  \\
    \bottomrule
    \end{tabular}%
  \caption{The coefficients of $K_{q_k}$ and $E_k$ }
    \label{table1}%
\end{table}%

\end{example}

\begin{figure}[H]
	\begin{minipage}[c]{0.23\textwidth}
		\centering
		\includegraphics[height=3.5cm,width=3cm]{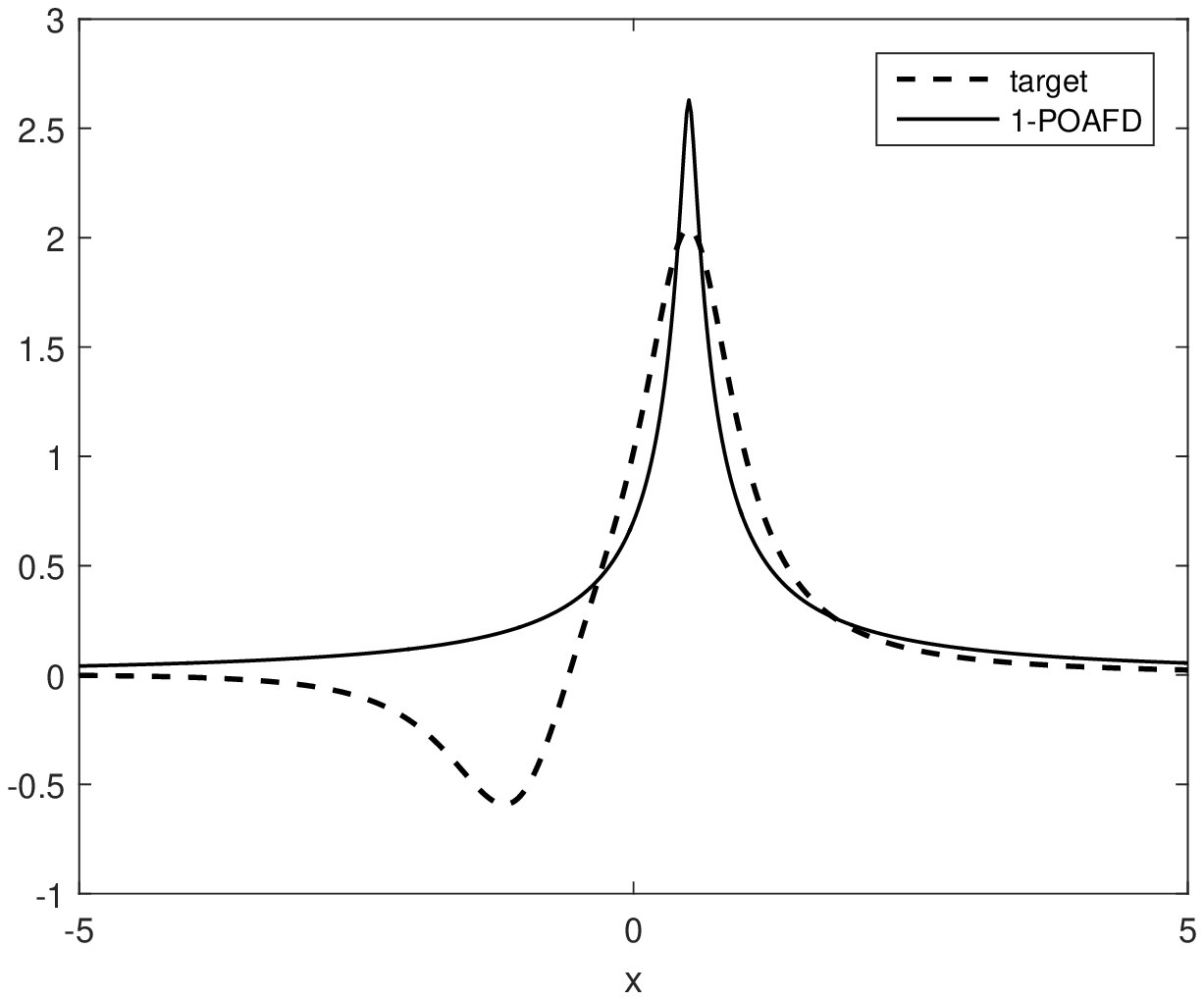}
		\centerline{{\scriptsize POAFD: 1 partial sum}}
	\end{minipage}
	\begin{minipage}[c]{0.23\textwidth}
		\centering
		\includegraphics[height=3.5cm,width=3cm]{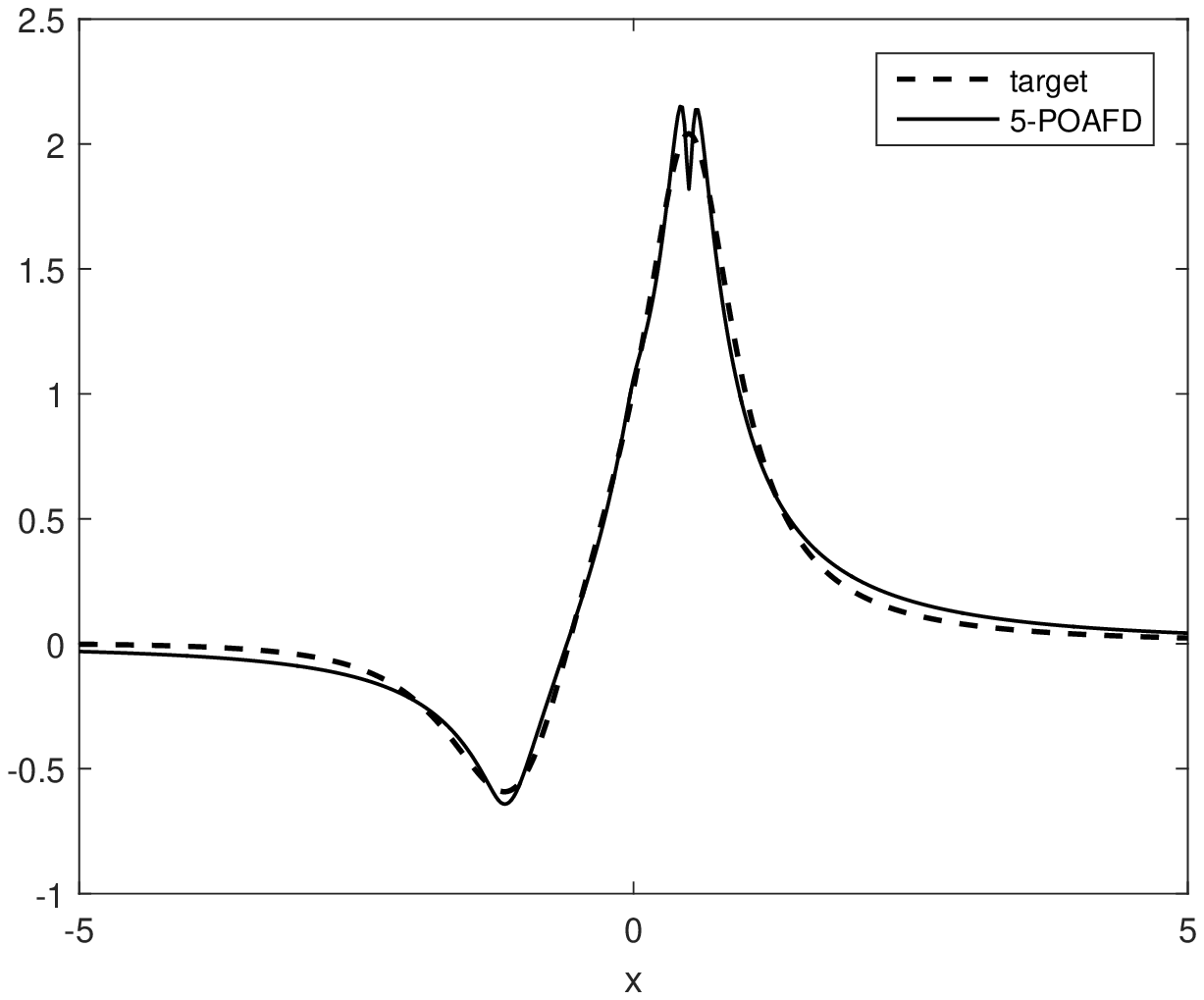}
		\centerline{{\scriptsize POAFD: 5 partial sum}}
	\end{minipage}
	\begin{minipage}[c]{0.23\textwidth}
		\centering
		\includegraphics[height=3.5cm,width=3cm]{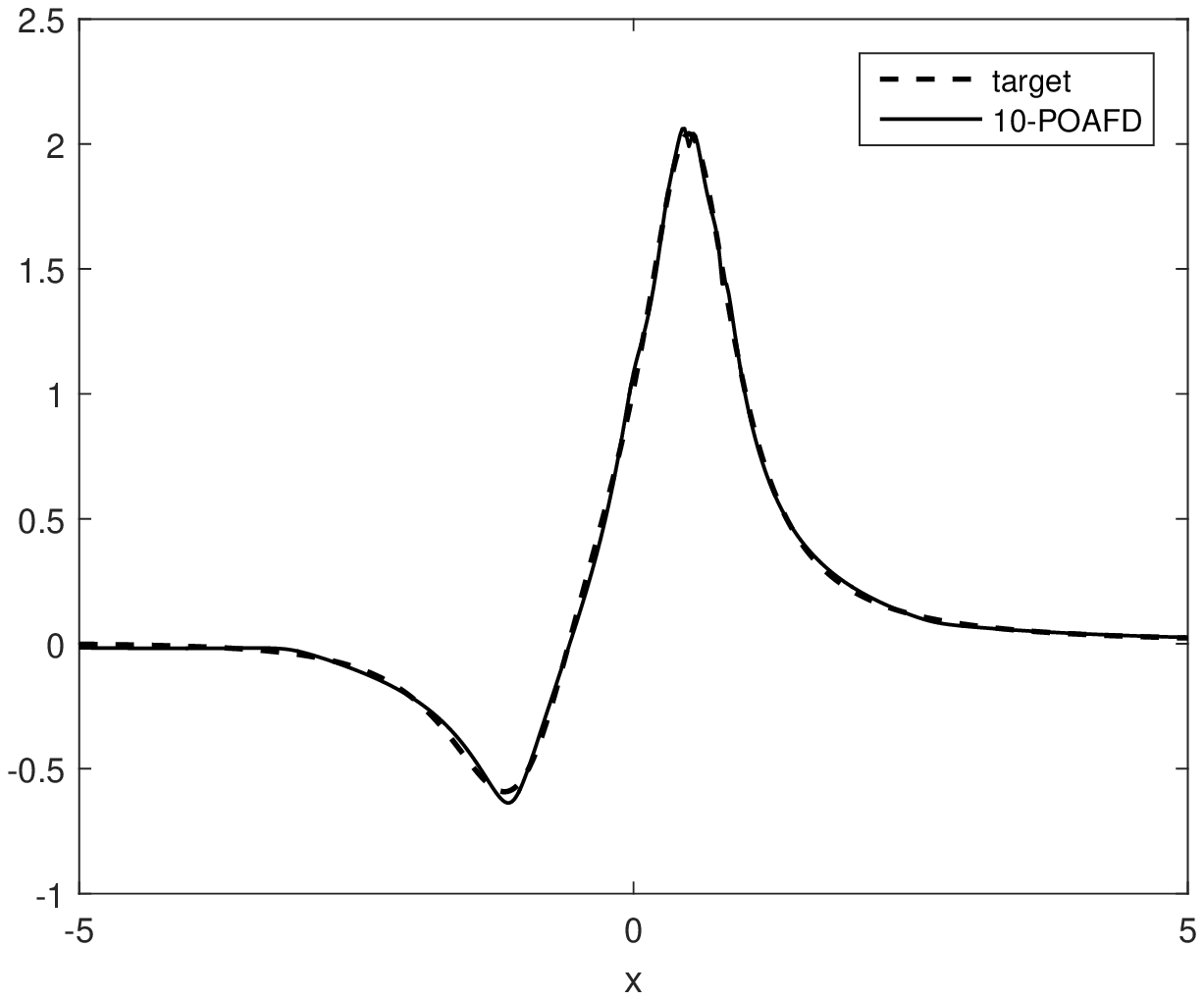}
		\centerline{{\scriptsize POAFD: 10 partial sum}}
	\end{minipage}
	\begin{minipage}[c]{0.23\textwidth}
		\centering
		\includegraphics[height=3.5cm,width=3cm]{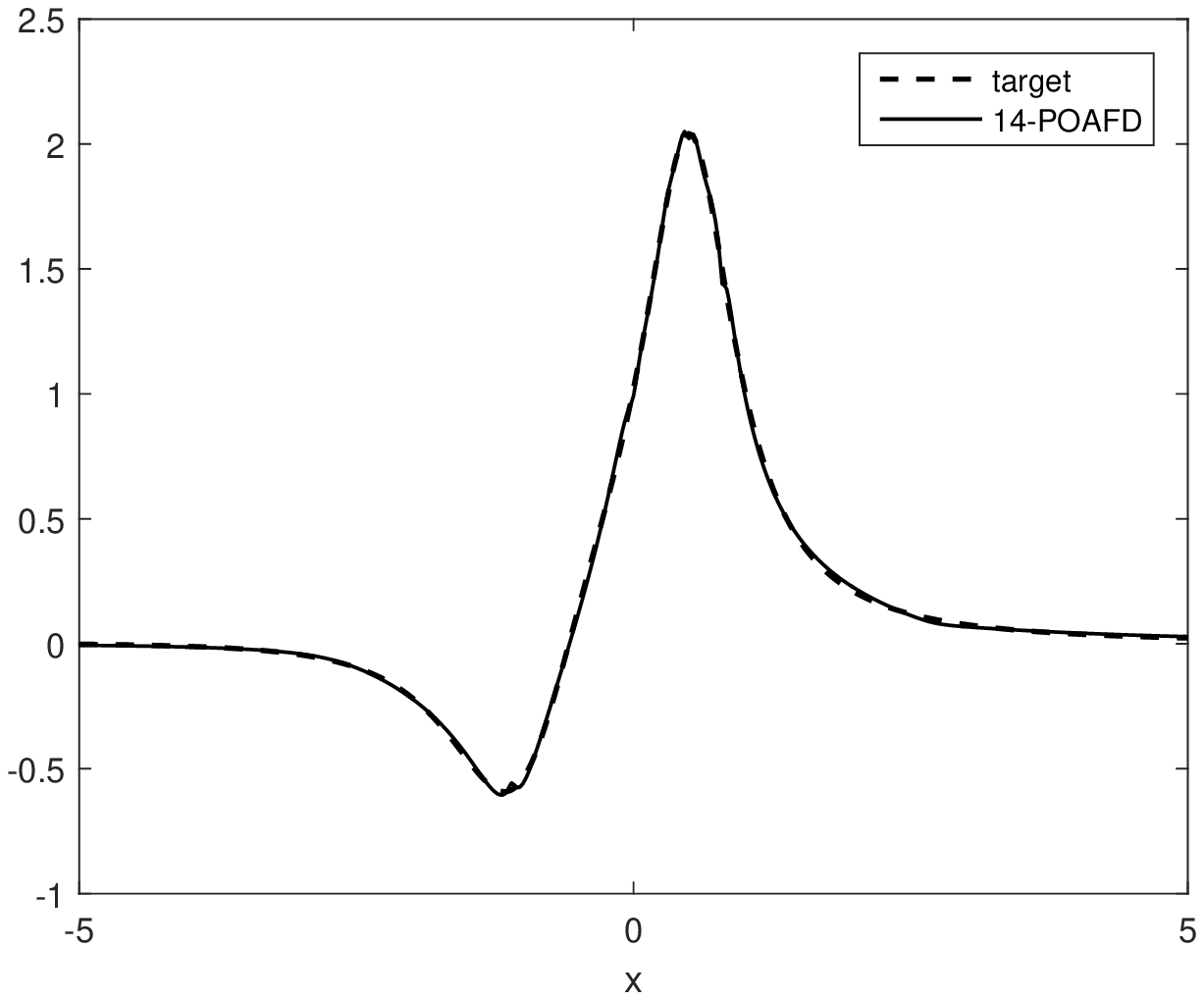}
		\centerline{{\scriptsize POAFD: 14 partial sum}}
	\end{minipage}
	\caption{\scriptsize Approximate $f$ by POAFD with heat integral kernel on the upper half plane}
\label{figure1}
\end{figure}

\begin{figure}[H]
	\begin{minipage}[c]{0.45\textwidth}
		\centering
		\includegraphics[height=5cm,width=6cm]{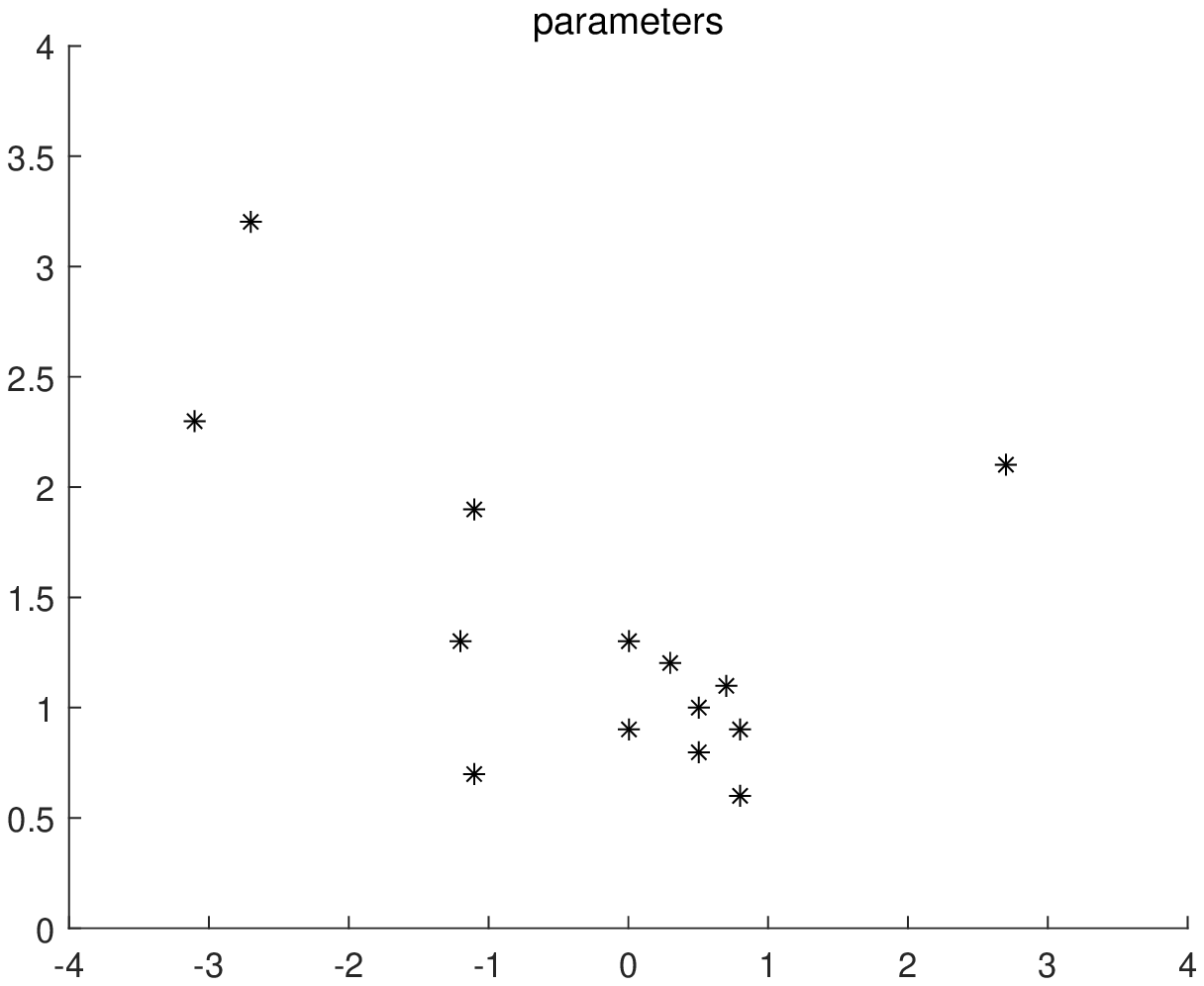}
		\centerline{{\scriptsize parameters}}
	\end{minipage}
	\begin{minipage}[c]{0.45\textwidth}
		\centering
		\includegraphics[height=5cm,width=6cm]{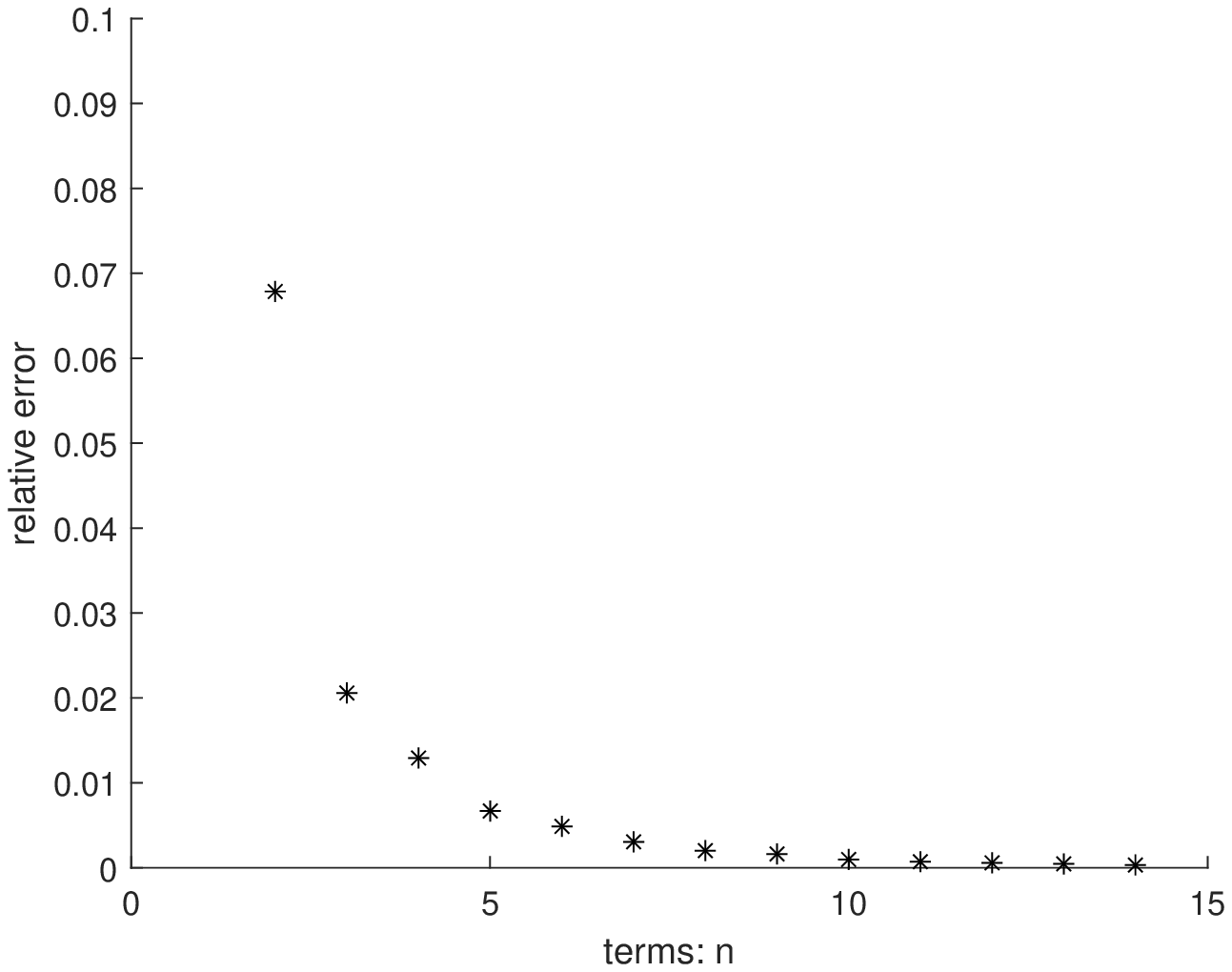}
		\centerline{{\scriptsize relative error}}
	\end{minipage}
	\caption{\scriptsize parameters and relative error}
\label{figure2}
\end{figure}

\begin{example} The next example is for the problem (\ref{eq-1.2}) with the initial value
$$f=\sum_{j=1}^5a_je^{-\frac{|b_j-x|^2}{a_j}}\cos x,$$
where $a_j=0.5,3.1,2.4,0.2,1.6$ and $b_j=1,0.6,-0.04,2.3,-2.$ Let now the dictionary consists of
$K_q(y)=h_p(y)$ given by (\ref{givenby}).

Let $K^\top=(K_{q_1},\ldots,K_{q_{13}})$ and $E^\top=(E_{1},\ldots,E_{13}).$ We have
$K=A^\top E,$ where
\begin{equation*}\label{matrix}
A=\left(\begin{smallmatrix}
    1     & 0.24  & 0.26  & 0.99  & 0.34  & 0.21  & 0.92  & 0.33  & 0.65  & 0.44  & 0.24  & 0.83  & 0.14  & 0.50  & 0.08  \\
    0     & 0.97  & 0.05  & -0.04  & 0.07  & 0.57  & 0.16  & 0.92  & 0.13  & 0.52  & 0.04  & -0.01  & 0.52  & 0.02  & 0.03  \\
    0     & 0     & 0.97  & -0.04  & 0.92  & -0.01  & 0.20  & 0.03  & 0.60  & 0.02  & 0.91  & -0.07  & 0.02  & 0.30  & 0.25  \\
    0     & 0     & 0     & 0.12  & -0.07  & -0.11  & -0.25  & -0.07  & -0.26  & -0.28  & -0.02  & -0.11  & -0.02  & -0.38  & -0.04  \\
    0     & 0     & 0     & 0     & 0.17  & -0.03  & 0.09  & 0.00  & 0.32  & -0.07  & 0.13  & -0.14  & 0.02  & 0.38  & 0.28  \\
    0     & 0     & 0     & 0     & 0     & 0.79  & 0.03  & 0.12  & 0.00  & 0.45  & 0.00  & -0.02  & -0.13  & -0.05  & 0.00  \\
    0     & 0     & 0     & 0     & 0     & 0     & 0.10  & 0.06  & 0.12  & 0.23  & -0.06  & -0.25  & 0.18  & 0.32  & 0.00  \\
    0     & 0     & 0     & 0     & 0     & 0     & 0     & 0.12  & -0.06  & 0.18  & 0.02  & 0.13  & 0.62  & -0.27  & 0.06  \\
    0     & 0     & 0     & 0     & 0     & 0     & 0     & 0     & 0.10  & -0.26  & -0.05  & -0.06  & 0.24  & 0.36  & -0.07  \\
    0     & 0     & 0     & 0     & 0     & 0     & 0     & 0     & 0     & 0.31  & 0.00  & 0.08  & -0.24  & -0.02  & -0.05  \\
    0     & 0     & 0     & 0     & 0     & 0     & 0     & 0     & 0     & 0     & 0.29  & -0.04  & 0.03  & 0.09  & -0.35  \\
    0     & 0     & 0     & 0     & 0     & 0     & 0     & 0     & 0     & 0     & 0     & 0.42  & -0.03  & 0.07  & 0.02  \\
    0     & 0     & 0     & 0     & 0     & 0     & 0     & 0     & 0     & 0     & 0     & 0     & 0.40  & -0.01  & -0.05  \\
    0     & 0     & 0     & 0     & 0     & 0     & 0     & 0     & 0     & 0     & 0     & 0     & 0     & 0.22  & -0.29  \\
    0     & 0     & 0     & 0     & 0     & 0     & 0     & 0     & 0     & 0     & 0     & 0     & 0     & 0     & 0.79
\end{smallmatrix}\right)_{15\times15}.
\end{equation*}
$$\Big\|f-\sum_{k=1}^{13}c_k\widetilde{K}_{q_k}\Big\|\leq 1.3486\times10^{-4}.$$
We also conclude
\[\Big\|u_f-\sum_{k=1}^{13}c_k\widetilde{K}_{q_k}\Big\|_{H_K}\leq 1.3486\times10^{-4},\]
where the coefficient $c_k$ and $\langle f,E_k\rangle$ are given in table \ref{table2}.
\begin{table}[htbp]
\myfont
  \centering
    \begin{tabular}{|c|c|c|c|c|c|c|c|c|c|c|c|c|c|c|c|}
    \toprule
    $c_k$ & 18.3  & -1.39 & 1.11  & -0.64 & -0.49 & -0.04 & -3.03 & 0.26  & 0.12  & -0.39 & 0.09  & 0.04  & 0.06  & 0.07  & 0.01 \\
    \midrule
   $\langle f,E_k\rangle$& 1.85  & -0.6  & -0.5  & -0.51 & -0.15 & -0.13 & 0.16  & -0.09 & -0.08 & -0.07 & 0.03  & 0.03  & 0.02  & 0.01  & 0.01 \\
    \bottomrule
    \end{tabular}%
  \caption{The coefficients of $K_{q_k}$ and $E_k$ }
    \label{table2}%
\end{table}%

\end{example}

\begin{figure}[H]
	\begin{minipage}[c]{0.23\textwidth}
		\centering
		\includegraphics[height=3.5cm,width=3.2cm]{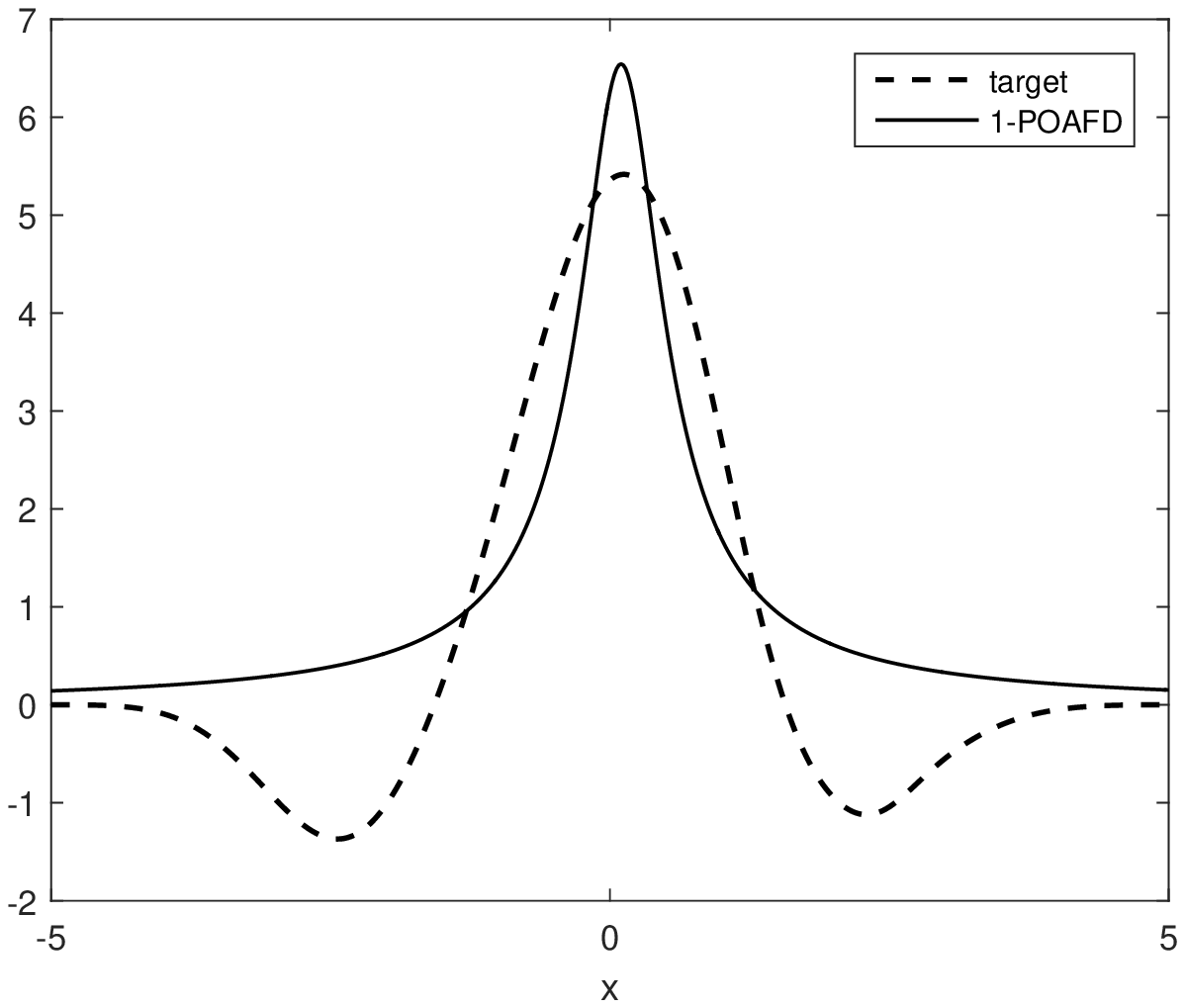}
		\centerline{{\scriptsize POAFD: 1 partial sum}}
	\end{minipage}
	\begin{minipage}[c]{0.23\textwidth}
		\centering
		\includegraphics[height=3.5cm,width=3.2cm]{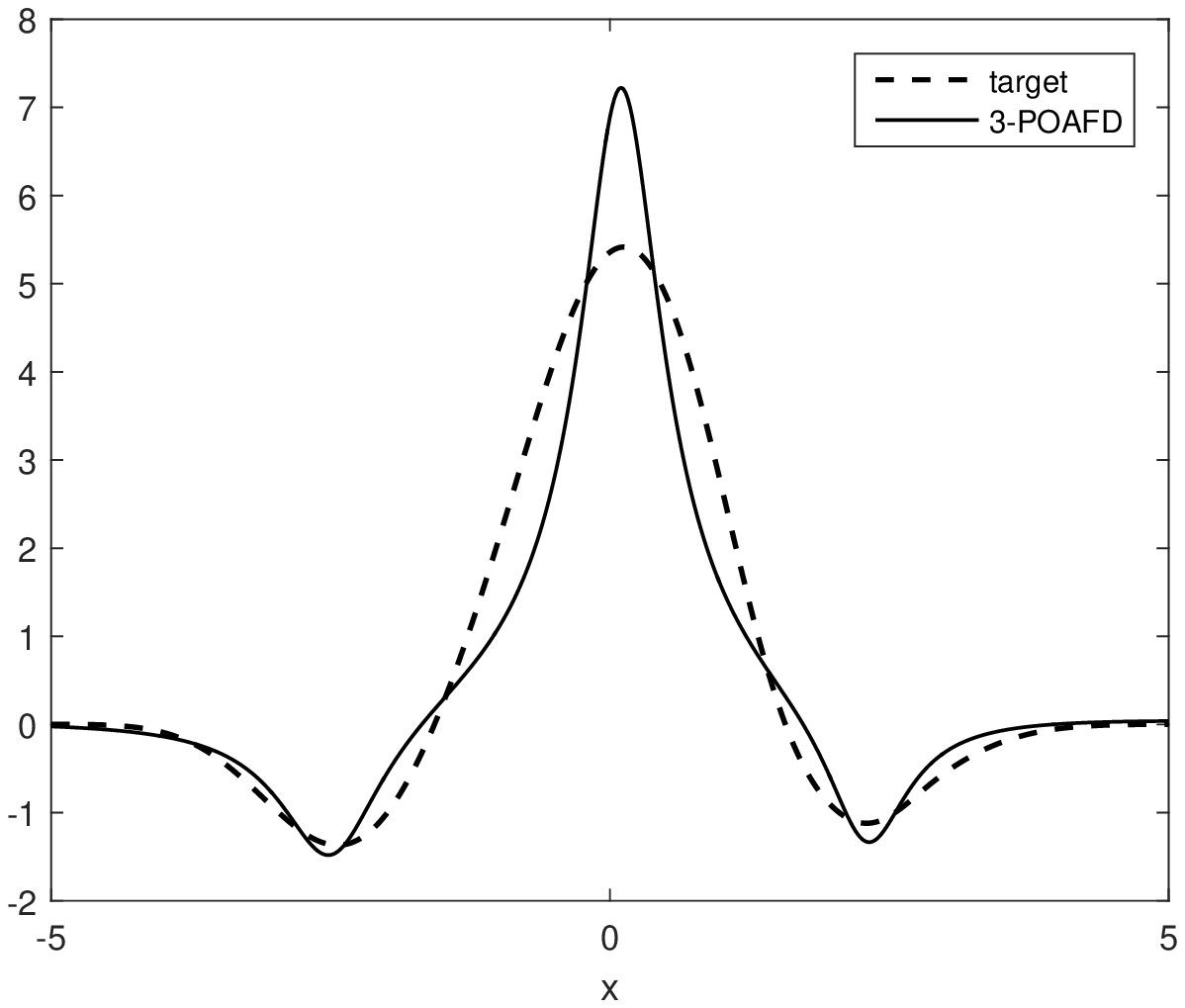}
		\centerline{{\scriptsize POAFD: 3 partial sum}}
	\end{minipage}
	\begin{minipage}[c]{0.23\textwidth}
		\centering
		\includegraphics[height=3.5cm,width=3.2cm]{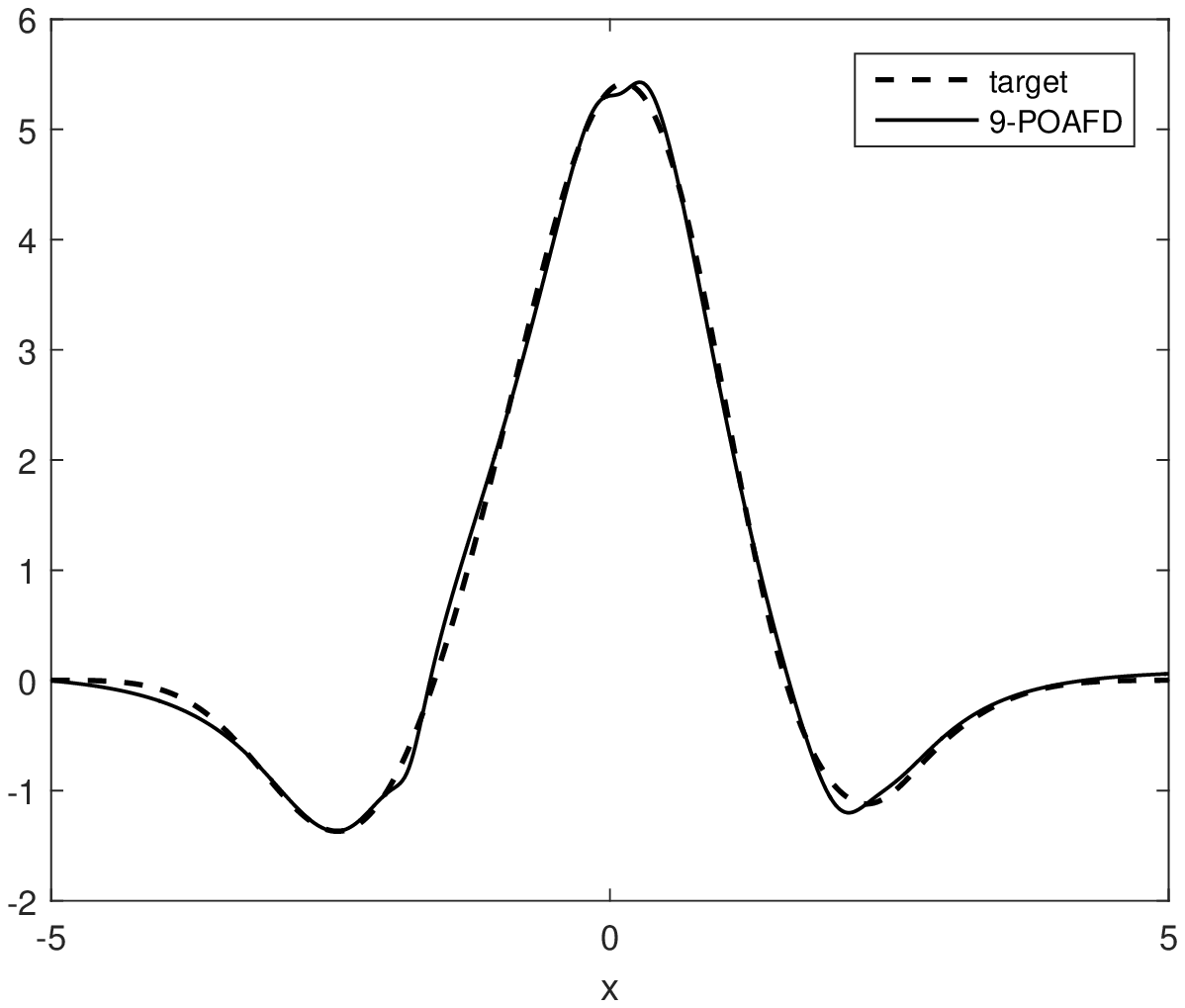}
		\centerline{{\scriptsize POAFD: 9 partial sum}}
	\end{minipage}
	\begin{minipage}[c]{0.23\textwidth}
		\centering
		\includegraphics[height=3.5cm,width=3.2cm]{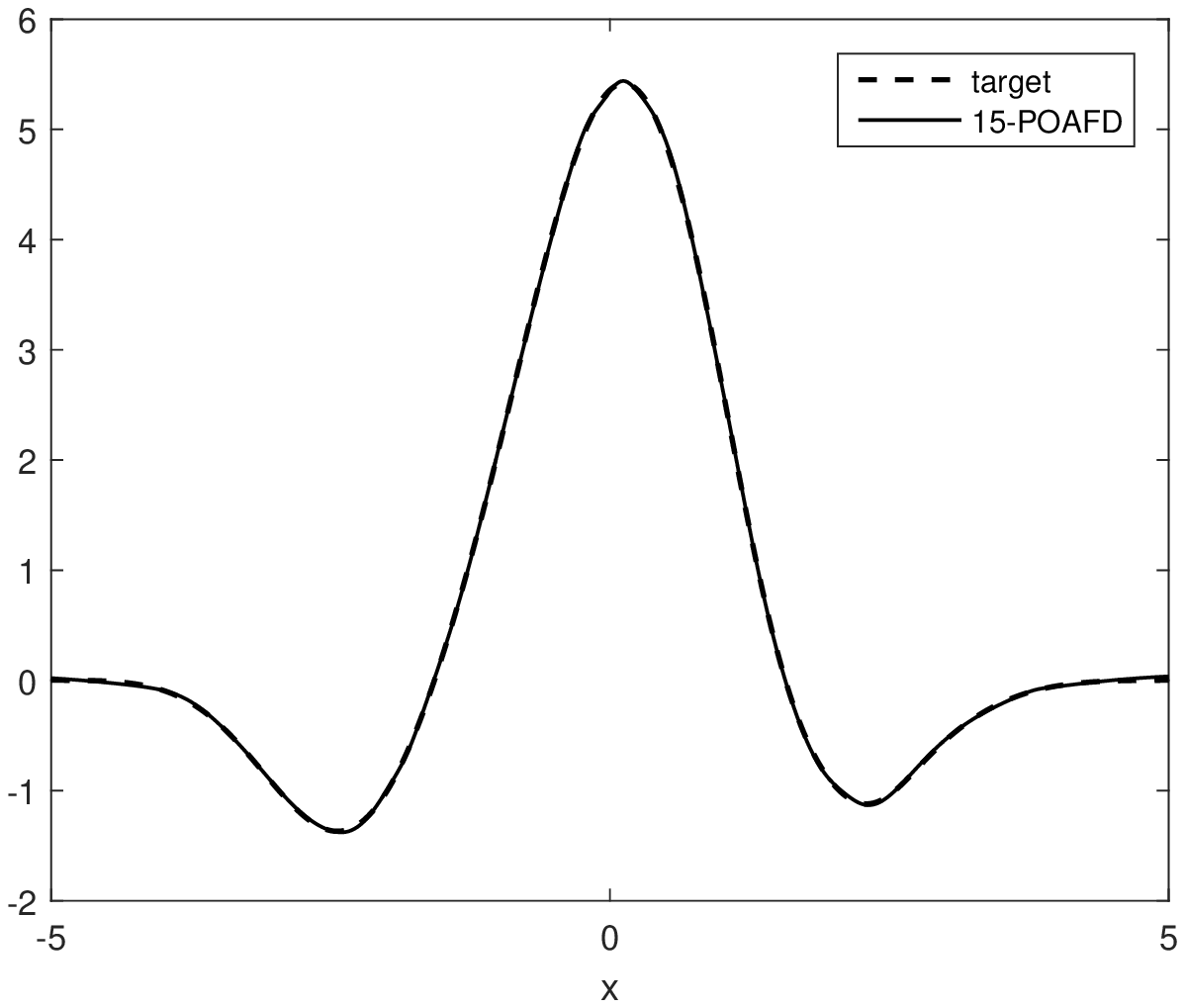}
		\centerline{{\scriptsize POAFD: 15 partial sum}}
	\end{minipage}
	\caption{\scriptsize Approximate $f$ by POAFD with Poisson kernel on the upper half plane}
\label{figure3}
\end{figure}

\begin{figure}[H]
	\begin{minipage}[c]{0.45\textwidth}
		\centering
		\includegraphics[height=5cm,width=6cm]{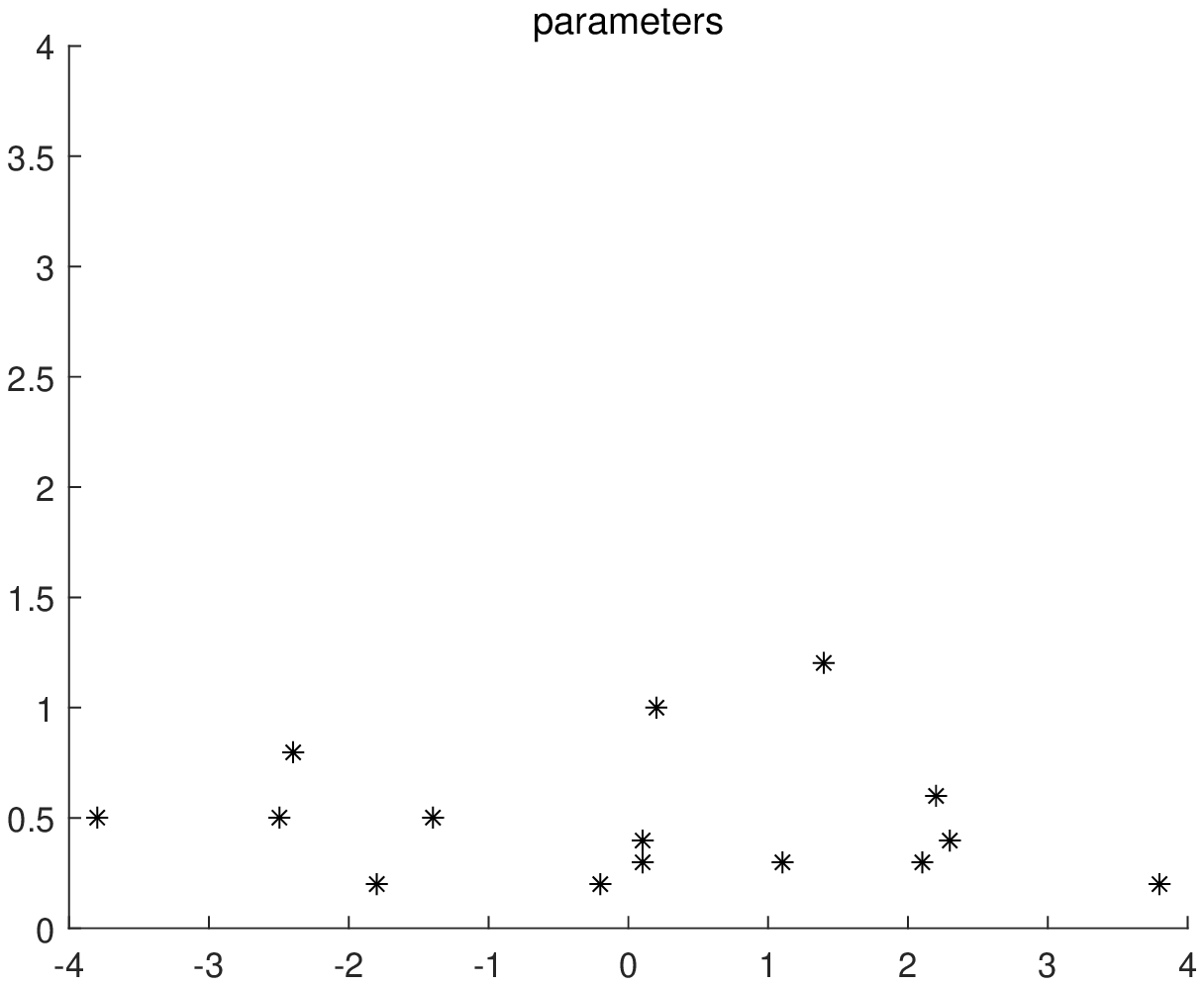}
		\centerline{{\scriptsize parameters}}
	\end{minipage}
	\begin{minipage}[c]{0.45\textwidth}
		\centering
		\includegraphics[height=5cm,width=6cm]{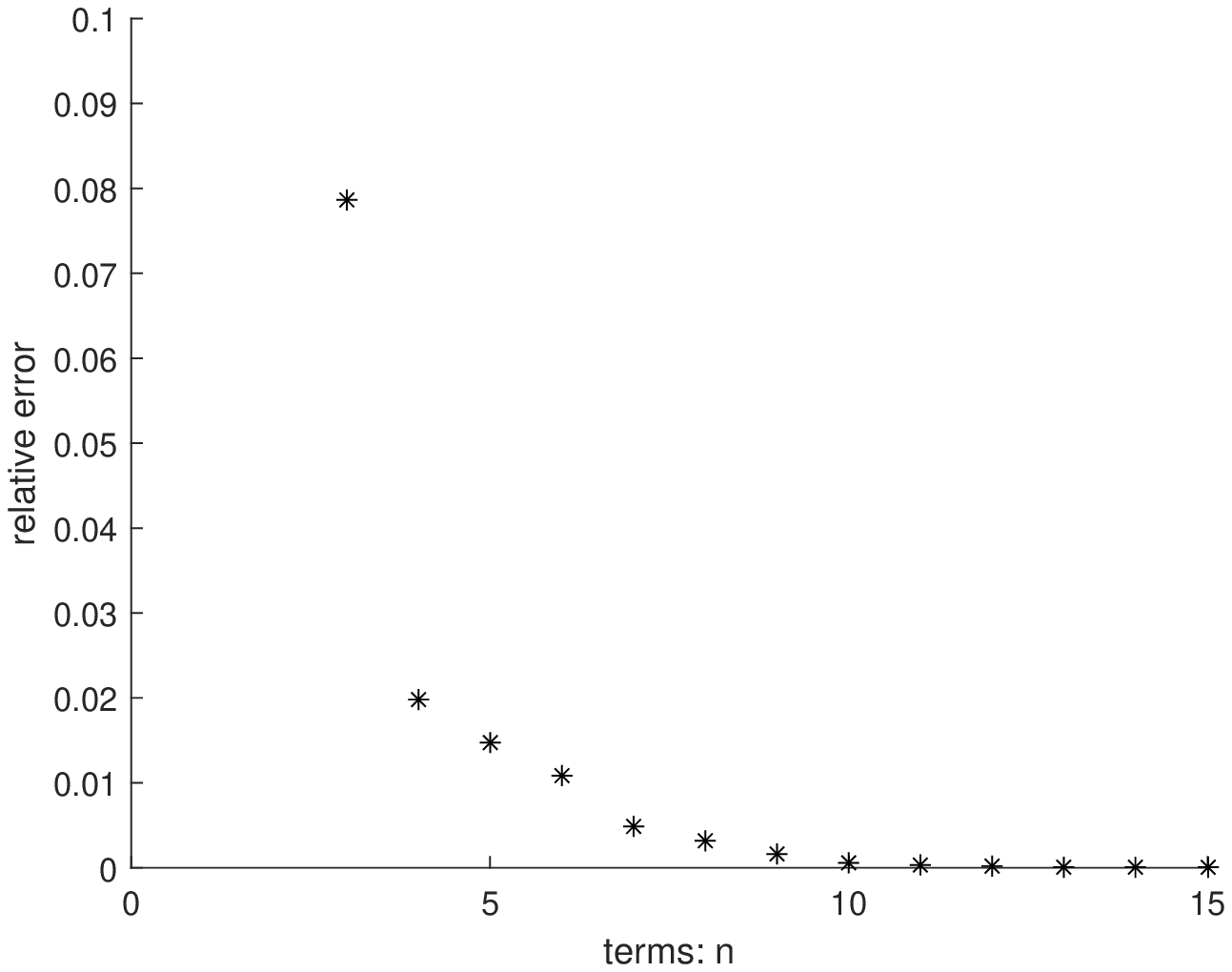}
		\centerline{{\scriptsize relative error}}
	\end{minipage}
	\caption{\scriptsize parameters and relative error}
\label{figure4}
\end{figure}

 \vspace{0.1in} \noindent

 {\bf{Acknowledgements.}}
Tao Qian is supported by the Science and Technology Development Fund, Macau SAR (File no. 0123/2018/A3). Pengtao Li is supported by the National Natural Science Foundation of China (Grant Nos. 12071272, 11871293) and the Shandong Natural Science Foundation of China (Grant No. ZR2020MA004). Ieng Tak Leong is supported by MYRG 2018-00168-FST.

\tabcolsep 30pt

\vspace*{7pt}

\end{document}